\newcommand{\N}{\mathbb{N}}
\newcommand{\Z}{\mathbb{Z}}
\newcommand{\R}{\mathbb{R}}
\newcommand{\h}{\mathbb{H}}
\newcommand{\PP}{\mathcal{P}}
\newcommand{\id}{\mathrm{id}}
\newcommand{\ar}{\rightarrow}
\newcommand{\inte}{\mathrm{int}}
\newcommand{\conv}{\mathrm{conv}}
\theoremstyle{plain} 
\newtheorem{thm}{\indent\sc Theorem}[section] 
\newtheorem{lem}[thm]{\indent\sc Lemma}
\newtheorem{cor}[thm]{\indent\sc Corollary}
\newtheorem{prop}[thm]{\indent\sc Proposition}
\theoremstyle{definition} 
\newtheorem{dfn}[thm]{\indent\sc Definition}
\newtheorem{rem}[thm]{\indent\sc Remark}
\newtheorem{notation}[thm]{\indent\sc Notation}
\newtheorem{assumption}[thm]{\indent\sc Assumption}
\begin{document}

\title[Cannon-Thurston maps for Coxeter groups]
{Cannon-Thurston maps for Coxeter groups \\ including affine special subgroups}

\author[Ryosuke Mineyama]{Ryosuke Mineyama$^*$} 
\subjclass[2010]{ 
Primary 20F55; Secondary 51F15.
}

\keywords{ 
Coxeter group, Limit set.
}

\thanks{ 
$^*$Partly supported by the Grant-in-Aid for JSPS Fellows, The Ministry of Education, Culture, Sports, Science and Technology, Japan.
}

\address{Ryosuke Mineyama\endgraf
Department of Mathematics Graduate School of Science \endgraf 
Osaka University\endgraf
Toyonaka Osaka 560-0043\endgraf
Japan
}
\email{r-mineyama@cr.math.sci.osaka-u.ac.jp}


\maketitle

\begin{abstract}
For a Coxeter group $W$ we have an associating bi-linear form $B$ on a real vector space.
We assume that $B$ has the signature $(n-1,1)$.
In this case we have the Cannon-Thurston map for $W$,
that is, a $W$-equivariant continuous surjection from the Gromov boundary of $W$ to the limit set 
of $W$.
We focus on the case where Coxeter groups contain affine special subgroups.
\end{abstract}

\section{Introduction}
Let $(X, d_X)$ and $(Y, d_Y)$ be metric spaces equipped 
with an action of a countable group $G$ respectively. 
A map $f : X \ar Y$ is called $G$-equivariant if 
$f$ satisfies 
\[
	g \circ f(x) = f \circ g(x)
\]
holds for all $x \in X$ and for all $g \in G$.

A continuous equivariant map between the boundary at infinity of a discrete group and 
its limit set is called a Cannon-Thurston map.
Mitra (\cite{Mit1}) considered
Cannon-Thurston maps for Gromov hyperbolic groups.
Let $H$ be a hyperbolic subgroup of a hyperbolic group $G$ in the sense of Gromov.
He asked whether the inclusion map always extends continuously to the equivariant map between the 
Gromov compactifications $\widehat{H}$ and $\widehat{G}$.
Here word metrics on $H$ and $G$ defining their compactifications may differ.
For this question he positively answered in the case when $H$ is
an infinite normal subgroup of a hyperbolic group $G$.
He also proved that
the existence of the Cannon-Thurston map when $G$ is
a hyperbolic group acting cocompactly on a simplicial tree $T$ 
such that all vertex and edge stabilizers are hyperbolic,
and $H$ is the stabilizer of a vertex or edge of $T$ provided 
every inclusion of an edge stabilizer in a vertex stabilizer is a quasi isometric embedding
(\cite{Mit2}).
On the other hand, Baker and Riley constructed a negative example for Mitra's question.
In fact they proved that there exists a free subgroup of rank 3 in 
a hyperbolic group such that the Cannon-Thurston map is not well-defined (\cite{BR}).
Adding to this Matsuda and Oguni showed that 
a similar phenomenon occurs for every non-elementary relatively hyperbolic group
(\cite{MO}).

Inspired by the above results we shall consider the problem which asks whether 
the Cannon-Thurston map for the Coxeter groups exists.
In \cite{m} the author confirmed that the existence of Cannon-Thurston maps for Coxeter groups
whose associating bilinear form has the signature $(n-1,1)$.
However the result is restricted to the case where
Coxeter groups have no affine Coxeter special subgroups of rank more than $3$.
Here a subgroup $(W',S')$ of a Coxeter group $(W,S)$ is {\it{special}} if $S' \subset S$.
The author verified that their exists the Cannon-Thurston map between the Gromov boundary of 
such a Coxeter group to the limit set.
In this paper we consider the case where Coxeter groups including affine Coxeter special subgroups.

The main theorem is stated as follows.

\begin{thm}\label{main}
	Let $W$ be a rank $n$ Coxeter groups whose associating bi-linear form $B$ has
	the signature $(n-1,1)$.
	Let $\partial_G W$ be the Gromov boundary of $W$ and let 
	$\Lambda(W)$ be the limit set of $W$.
	There exists a $W$-equivariant, continuous surjection 
	$F : \partial_G W \ar \Lambda(W)$.
\end{thm}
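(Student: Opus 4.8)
The plan is to recast the problem as a coarse $W$-equivariant map between two genuinely Gromov hyperbolic spaces on which $W$ acts---one whose Gromov boundary is $\partial_G W$, the other $\h^{n-1}$, in whose ideal boundary $\Lambda(W)$ sits---and then to verify Mitra's criterion for such a map to extend continuously to the boundaries. First I would fix the geometric representation of $W$ on $(\R^n,B)$ and pass to the projective (Klein) model of $\h^{n-1}$, so that $W$ becomes a discrete reflection group whose fundamental chamber is a convex polyhedron with at most $n$ facets; being finite-sided, this polyhedron makes the action geometrically finite, its maximal parabolic subgroups are precisely the $W$-conjugates of the maximal affine special subgroups of $(W,S)$, and $\Lambda(W)\subset\partial\h^{n-1}$ is its limit set. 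Hence $W$ is hyperbolic relative to the family $\PP$ of conjugacy representatives of the maximal affine special subgroups, and $\partial_G W$ is the Gromov boundary of the associated cusped space $X$: the Cayley $2$-complex of $(W,S)$ with combinatorial horoballs attached along the cosets of the members of $\PP$, a proper geodesic hyperbolic space with a geometric $W$-action, whose identity vertex I call $p$.

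Second I would build a coarse $W$-equivariant map $\Phi\colon X\ar\h^{n-1}$: for a basepoint $x_0$ in the interior of the fundamental chamber put $\Phi(w)=w\cdot x_0$ on the vertices of the Cayley complex (so $\Phi(p)=x_0$), extend coarsely over the cells, and send the combinatorial horoball over a coset $wP$ into a genuine horoball of $\h^{n-1}$ based at the parabolic fixed point of $wPw^{-1}$, level by level. Equivariance is built in, and $\Phi$ is coarsely Lipschitz and proper; the point requiring care is that the $k$-th level of a combinatorial horoball lies at distance $\asymp\log$ of the corresponding distance along the $P$-coset, both in the combinatorial and in the hyperbolic horoball, so that $\Phi$ restricts to a uniform quasi-isometric embedding on each horoball.

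Third, which is the crux, I would verify Mitra's continuous-extension criterion (\cite{Mit1}): there is a function $M(N)\to\infty$ with the property that for every geodesic segment $\lambda$ of $X$ with $d_X(p,\lambda)\ge N$, any geodesic of $\h^{n-1}$ joining the $\Phi$-images of the endpoints of $\lambda$ avoids the $M(N)$-ball about $x_0$. A geodesic of $X$ decomposes into maximal arcs inside combinatorial horoballs and complementary arcs lying in the Cayley complex, the latter being relative quasigeodesics of $(W,\PP)$ (as controlled by bounded coset penetration); $\Phi$ carries the former to uniform quasigeodesics inside the hyperbolic horoballs and the latter to uniform quasigeodesics in the thick part of $\h^{n-1}$, the last fact being the relative quasiconvexity of $W$ supplied by geometric finiteness. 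By the standard concatenation estimates in relatively hyperbolic spaces---or, in a model of $\partial_G W$ where $\Phi$ collapses, by a relatively hyperbolic refinement of Mitra's hyperbolic-ladder construction---$\Phi(\lambda)$ remains within a uniform Hausdorff distance of any geodesic joining its endpoints, and since $\Phi$ is proper, $d_X(p,\lambda)\ge N$ forces $\Phi(\lambda)$, and hence that geodesic, off an $M(N)$-ball about $x_0$ with $M(N)\to\infty$. This yields the continuous extension $\bar\Phi\colon\bar X\ar\overline{\h^{n-1}}$.

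Finally, $\bar\Phi(\partial_G W)\subseteq\Lambda(W)$ because a point of $\partial_G W$ is a limit of vertices $w_k$ and is sent to $\lim_k w_k x_0\in\Lambda(W)$, and surjectivity follows from compactness of $\bar X$ and properness of $\Phi$: any $\xi\in\Lambda(W)$ is $\lim_k w_k x_0$ for vertices $w_k$ leaving every ball of $X$, a convergent subsequence of which has limit mapped by $\bar\Phi$ to $\xi$, the parabolic points of $\Lambda(W)$ arising from sequences running out the combinatorial horoballs. Setting $F:=\bar\Phi|_{\partial_G W}$ gives the required $W$-equivariant continuous surjection. I expect the main obstacle to be the cusp geometry in the third step: a geodesic of $X$ that repeatedly grazes the affine cosets without penetrating them deeply must still have $\Phi$-image escaping to infinity, and since the word (equivalently cusped) metric on an affine special subgroup and its realisation in a horosphere of $\h^{n-1}$ diverge maximally---exactly the phenomenon absent from the Coxeter groups treated in \cite{m}---making the quasigeodesic and properness constants uniform across the countably many cusps, and checking at the outset that the affine special subgroups account for all the parabolics, is the delicate part.
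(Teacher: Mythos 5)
There is a genuine gap, and it sits at the very first step: you assert that $\partial_G W$ ``is the Gromov boundary of the associated cusped space $X$'' (equivalently, the Bowditch boundary of $(W,\PP)$), and everything afterwards is a map out of $\partial X$, not out of $\partial_G W$. But $\partial_G W$ in this theorem is the Buckley--Kokkendorff boundary of $W$ with its \emph{word} metric, and identifying it with the boundary of the cusped space is exactly the hard comparison the theorem is about; it is nowhere justified in your outline, and it is in fact false in general. If some $m_{\alpha\beta}=\infty$ with $B(\alpha,\beta)=-1$, the infinite dihedral special subgroup $\langle s_\alpha,s_\beta\rangle$ is a parabolic (rank~$2$ cusp) for the normalized action, hence a peripheral subgroup of your structure $\PP$, and its parabolic point is a single point of $\partial X$; yet the two sequences $\{(s_\alpha s_\beta)^k\}$ and $\{(s_\beta s_\alpha)^k\}$ define \emph{distinct} classes in $\partial_G W$ (the paper's Remark \ref{hitotsu} shows this by switching to a form $B'$ with $B'(\alpha,\beta)<-1$), so the Cannon--Thurston map is $2$-to-$1$ over such points and $\partial_G W\not\cong\partial X$. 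Even when all cusps have rank $\ge 3$, the claim that the transitive-closure Gromov boundary of the (non-hyperbolic) word metric coincides with the cusped-space boundary is a substantive statement needing proof --- precisely because the word metric on an affine coset and the horospherical geometry diverge logarithmically, which is the distortion you flag at the end but never actually control on the $\partial_G W$ side.

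By contrast, the paper never passes through a cusped space or a Bowditch boundary. It works with the truncated domain $D''=D'\setminus W\cdot O$ obtained by deleting an equivariant family of horoballs at the points of $PF$ (Lemma \ref{bunri}), shows $W$ with the word metric is quasi-isometric to $(D'',d')$ via the compact fundamental domain $K''$ (Lemma \ref{kuraberu}, Milnor--\v{S}varc style, following Floyd), uses the estimate $2\log d'(x,y)\le d(x,y)$ comparing the horospherical path metric with the Hilbert metric, tracks word geodesics against geodesics of $(D'',d')$ and of $(D,d)$ (Lemma \ref{geogeo}), and then bounds the Hilbert-metric Gromov product $(w_x\cdot o\mid w_y\cdot o)_o$ from below by $\log$ of the word-metric Gromov product; Karlsson--Noskov (Theorem \ref{kn}) then converts divergence of Gromov products into convergence in $\partial D$, giving well-definedness and continuity of $\widetilde F$ on $\partial_G W$ directly. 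Your steps 2--4 (mapping combinatorial horoballs to genuine horoballs and verifying Mitra's criterion for $X\ar\h^{n-1}$) essentially reprove that the Bowditch boundary of the geometrically finite action is $\Lambda(W)$, which is not the content of the theorem; the missing ingredient is a proof that Gromov sequences for the word metric converge in $\overline{D}$, i.e.\ some substitute for the paper's $d$ versus $d'$ estimates, without which the proposal does not produce a map defined on $\partial_G W$ at all.
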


We remark that the Gromov boundary is ordinary defined on a hyperbolic metric space.
We extend the definition 
to arbitrary metric space by taking transitive closure due to Buckley and Kokkendorff (\cite{BK}).
The limit set of a Coxeter subgroup $W'$ generated 
by a subset $S'$ of $S$ are located on $\partial D$.
In fact the set of basis $\Delta'$ corresponding to $W'$ is a subset of $\Delta$
and the limit set of $W'$ is distributed on convex hull of $\Delta'$.
This fact leads the following corollary:

\begin{cor}\label{amari}
	Let $(W,S)$ be a Coxeter system of rank $n$ 
	whose associated bi-linear form has the signature $(n-1,1)$.
	For a special subgroup $W'$ whose associated bi-linear form has the signature $(n-1,1)$,
	if the normalized action (see \S 2) of $W'$ is cocompact,
	then the limit set $\Lambda(W')$ of $W'$ is canonically embedded into 
	the limit set of $\Lambda(W)$. 
\end{cor}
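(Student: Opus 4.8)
The plan is to read the statement off from Theorem~\ref{main} applied to the Coxeter system $(W',S')$, from the elementary inclusion of limit sets, and from a compactness argument, using the cocompactness hypothesis only to make the embedding canonical at the level of Gromov boundaries. First I would note that, since $\Delta'\subseteq\Delta$ and the bilinear form of $W'$ has Lorentzian signature, the span of $\Delta'$ is a nondegenerate subspace of Lorentzian type, so the hyperbolic space $D'$ carrying the normalized action of $W'$ is a totally geodesic subspace of $D$ and $\partial D'\subseteq\partial D$. Hence $\Lambda(W')$, a priori a closed subset of $\partial D'$, is literally a closed subset of $\partial D$; it lies on $\conv(\Delta')$ as recalled before the statement. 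Since $W'\leq W$, every accumulation point of a $W'$-orbit in $D$ is an accumulation point of the corresponding $W$-orbit, so $\Lambda(W')\subseteq\Lambda(W)$; as $\Lambda(W')$ is compact (closed in the sphere $\partial D$) and $\Lambda(W)$ is Hausdorff, this set-theoretic inclusion is automatically a topological embedding.

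To see that the embedding is \emph{canonical}, that is, induced by $W'\hookrightarrow W$ through the Cannon--Thurston maps, I would apply Theorem~\ref{main} to $(W',S')$ to get a $W'$-equivariant continuous surjection $F'\colon\partial_G W'\ar\Lambda(W')$, and use that a special subgroup is convex, hence quasi-isometrically embedded, in the Cayley graph of $W$. The quasi-isometric embedding induces a map $\iota_*\colon\partial_G W'\ar\partial_G W$ of Buckley--Kokkendorff boundaries, and I would verify that the square
\[
\begin{array}{ccc}
\partial_G W' & \xrightarrow{F'} & \Lambda(W') \\
\downarrow & & \downarrow \\
\partial_G W & \xrightarrow{F} & \Lambda(W)
\end{array}
\]
commutes, the left arrow being $\iota_*$ and the right arrow the inclusion found above: both $F$ and $F'$ send a boundary point to the endpoint on $\partial D$ of a representing quasi-geodesic ray, and a quasi-isometric embedding carries a geodesic ray of $W'$ to a quasi-geodesic ray of $W$ with the same endpoint on $\partial D$, an endpoint unchanged by passing to the transitive closure. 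Finally, cocompactness of the normalized $W'$-action makes $W'$ a word hyperbolic group with $\Lambda(W')=\partial D'$ and $F'$ a continuous bijection from a compact space, hence a homeomorphism; composing the commutative square with $(F')^{-1}$ then identifies the inclusion $\Lambda(W')\hookrightarrow\Lambda(W)$ with $F\circ\iota_*\circ(F')^{-1}$, exhibiting it as the image of the natural boundary embedding under the Cannon--Thurston maps.

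I expect the only real difficulty to be this middle step: checking that the convex embedding $W'\hookrightarrow W$ does induce a well-defined continuous $\iota_*$ on the Buckley--Kokkendorff boundaries and that the displayed square commutes. Because $W$ need not be Gromov hyperbolic — indeed it may contain affine special subgroups — the transitive-closure boundary is more delicate than the usual Gromov boundary, and one must control how geodesic rays of $W'$ lie inside $W$ relative to these affine subgroups; this is precisely the analysis already carried out in the proof of Theorem~\ref{main}, which I would reuse. Note that injectivity of $\iota_*$ is not needed for the corollary, since the inclusion $\Lambda(W')\subseteq\Lambda(W)$ is an embedding on its own for compactness reasons; $\iota_*$ enters only to certify canonicity.
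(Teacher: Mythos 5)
Your proposal is correct and its backbone coincides with the paper's: the paper also proves the corollary by observing that reduced words of $W'$ stay reduced in $W$ (deletion condition), so the Cayley graph of $(W',S')$ embeds isometrically into that of $(W,S)$, that under cocompactness this extends to an embedding $\partial_G W'\hookrightarrow\partial_G W$, and that composing with the Cannon--Thurston map of $(W,S)$ gives the canonical embedding of limit sets. The differences are minor but worth noting. The paper invokes its Remark \ref{hitotsu} (injectivity of $\widetilde{F}$ away from the orbit of the cusp points) to conclude the sharper statement that $\Lambda(W')$ is identified with $\Lambda(W)\cap\conv(\widehat{\Delta'})$; you avoid that remark by first establishing the set-theoretic inclusion $\Lambda(W')\subseteq\Lambda(W)$ directly from the totally geodesic copy of $D'$ in $D$, and then using cocompactness only to make $F'$ a homeomorphism and certify canonicity through the commuting square. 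A pleasant byproduct of your arrangement is that you never need injectivity of $\iota_*$, whereas the paper's one-line assertion that the Cayley-graph embedding ``extends to an embedding'' of Buckley--Kokkendorff boundaries is exactly the delicate point (the transitive closure in $W$ could a priori merge classes); conversely, the paper's route delivers the stronger identification of the image. One small correction: you justify $\iota_*$ via ``convex, hence quasi-isometrically embedded,'' but for well-definedness on Buckley--Kokkendorff boundaries you should use that the embedding is actually isometric (word lengths of elements of $W'$ agree in $W$), so Gromov products based at the identity are literally equal and Gromov sequences are preserved; a mere quasi-isometric embedding into the possibly non-hyperbolic $W$ would not give this for free. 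With that adjustment, and granting the deferred verification of continuity of $\iota_*$ and commutativity of the square (which the paper itself leaves implicit, resting on the construction of $\widetilde{F}$ as limits of orbit sequences), your proof is sound.
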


\subsection*{Acknowlegement}
I would like to express my gratitude to Prof. Hideki Miyachi for his helpful advices.

\section{The Coxeter systems and $B$-reflections}
\subsection{The Coxeter systems}
A group $W$ is a {\em Coxeter group} of rank $n$ with the generating set $S$ 
if $W$ is generated by the set $S=\{s_1, \ldots, s_n\}$ subject only to the relations 
$(s_is_j)^{m_{ij}}=1$, where $m_{ij} \in \Z_{>1} \cup \{\infty\}$ for $1 \leq i < j \leq n$ 
and $m_{ii}=1$ for $1 \leq i \leq n$, i.e., 
\[
	W = \langle s_1, \ldots, s_n\ \vert\ (s_is_j)^{m_{ij}}\ {\text{for}}\ i,j= 1,\ldots,n\rangle.
\]
A pair $(W,S)$ is said to be a {\em Coxeter system}. 
We refer the reader to \cite{bb,davis,h} for the introduction to Coxeter groups. 

For a Coxeter system $(W,S)$ of rank $n$, 
let $V$ be an $\R$ vector space with its orthonormal basis $\Delta=\{\alpha_s \vert s \in S\}$
with respect to the Euclidean inner product.
The set $\Delta$ is called a {\em simple system} and 
its elements are {\em simple roots} of $W$. 
Note that by identifying $V$ with $\R^n$, we treat $V$ as a Euclidean space. 
We define a symmetric bilinear form on $V$ by setting 
\begin{align*}
	B(\alpha_i, \alpha_j) \; 
	\begin{cases}
		\ = -\cos\left(\frac{\pi}{m_{ij}}\right) \;\;\;&\text{if } m_{ij}< \infty, \\
		\ \le -1										 &\text{if }m_{ij}=\infty 
	\end{cases}
\end{align*}
for $1 \leq i \leq j \leq n$, where $\alpha_{s_i}=\alpha_i$. 
This definition of the bi-linear form is introduced in \cite{hlr} 
to consider {\it the based root system}.

Given $\alpha \in V$ such that $B(\alpha, \alpha) \not=0$, $s_\alpha$ denotes 
the map $s_\alpha : \ V \to V$ by
\[
	s_\alpha(v) = v- 2 \frac{B(\alpha,v)}{B(\alpha, \alpha)}\alpha 
					\;\;\;\text{for any } v \in V,
\]
which is said to be a {\em $B$-reflection}.
Since $B(\alpha,\alpha) = 1$ for $\alpha \in \Delta$,
we notice that $B$-reflections preserve $B$; for any $v,u \in V$ and any 
$B$-reflection $s_\alpha$, we have
$B(s_\alpha(v),s_\alpha(u)) = B(v,u)$ if $\alpha \in \Delta$.
Thus $W$ acts on $V$ as orthonormal transformations with respect to $B$.

\begin{assumption}\label{assume}
	In this paper, we always assume the following.
	\begin{itemize}
	\item The bilinear form $B$ has the signature $(n-1,1)$.
	We call such a group a Coxeter group of type $(n-1,1)$.
	\item The Coxeter matrix $B$ is not block-diagonal 
	up to permutation of the basis.
	In that case, the matrix $B$ is said to be {\it irreducible}.
	\end{itemize}
\end{assumption}

We only need to work on the case that $B$ is irreducible.
If the matrix $B$ is reducible, then we can divide $\Delta$
into $l$ subsets $\Delta = \sqcup_{i=1}^l\Delta_i $
so that each corresponding matrix $B_{i} = \{B(\alpha,\beta)\}_{\alpha,\beta \in \Delta_i}$ 
is irreducible.
Then for any distinct $i,j$, if $\alpha \in \Delta_i$ and $\beta \in \Delta_j$,
$s_\alpha$ and $s_\beta$ commute.
In this case we see that $W$ is a direct product
\[
	W = W_1 \times W_2 \times \cdots \times W_l,
\]
where $W_i$ is the Coxeter group corresponding to $\Delta_i$.
From this, the action of $W$ can be regarded as direct product of the actions of each $W_i$ then
we see that	
$
	E = \sqcup_{i=1}^l E_i,
$
where $E_i$ is the set of accumulation points of roots $W_i \cdot \Delta_i$ 
(see Proposition 2.14 in \cite{hlr}).
Moreover if $B$ has the signature $(n-1,1)$, there exists a unique $B_k$ 
which has signature $(n_k-1,1)$ and others are positive definite.
Since if the Coxeter matrix is positive definite then 
corresponding Coxeter group $W'$ is finite,
the limit set $\Lambda(W') = \emptyset$.
This ensures that $\Lambda(W)$ is distributed on $\conv(\widehat{\Delta_k})$,
where $\conv(\widehat{\Delta_k})$ is the convex hull of $\widehat{\Delta_k}$
(see \S \ref{tsugi} for the definition of $\widehat{\cdot}$ ).
Thus $\Lambda(W)$ can be identified with $\Lambda(W_k)$.
Accordingly, 
if there exists the Cannon-Thurston map for $W_k$ then we also have the Cannon-Thurston map
for the whole group $W$.
This follows from the fact that the direct product $G_1 \times G_2$ of 
a finite generated infinite group $G_1$ and a finite group $G_2$
has the same Gromov boundary as the Gromov boundary of $G_1$.

\subsection{$B$-reflections and the normalized actions of $W$}\label{tsugi}
Recall that a matrix $A$ is {\it non-negative} 
if each entry of $A$ is non-negative.

\begin{lem}\label{heimen}\label{korekore}
	Let $o$ be an eigenvector for the negative eigenvalue of $B$.
	Then all coordinates of $o$ have the same sign.
\end{lem}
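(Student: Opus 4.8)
The plan is to exploit the Perron--Frobenius theory, for which I first need to transform $B$ into a matrix to which that theory applies. Since $\Delta$ is an orthonormal basis for the Euclidean inner product, $B$ is represented by the symmetric matrix $B = (B(\alpha_i,\alpha_j))_{i,j}$, whose off-diagonal entries are all $\le 0$ (they equal $-\cos(\pi/m_{ij})$ when $m_{ij}<\infty$, which lies in $[-1,0)$, or are $\le -1$ when $m_{ij}=\infty$), while the diagonal entries are all $1$. Fix a real number $c$ larger than the spectral radius of $B$ and set $A = cI - B$. Then $A$ is a non-negative matrix: its off-diagonal entries are $-B(\alpha_i,\alpha_j)\ge 0$, and its diagonal entries $c-1$ are positive once $c>1$, which we may assume. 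Moreover $A$ is irreducible as a non-negative matrix precisely because $B$ is irreducible in the sense of Assumption~\ref{assume} (the zero pattern of $A$ off the diagonal is the same as that of $B$, and ``not block-diagonal up to permutation'' is exactly irreducibility of the associated graph).

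Next I invoke the Perron--Frobenius theorem for irreducible non-negative matrices: $A$ has a positive eigenvalue $\lambda$ equal to its spectral radius, this eigenvalue is simple, and it admits an eigenvector all of whose coordinates are strictly positive; furthermore this is the \emph{only} eigenvalue of $A$ possessing an eigenvector with all coordinates of one sign. Translating back through $A=cI-B$, the eigenvectors of $A$ are exactly the eigenvectors of $B$, with the eigenvalue $\mu$ of $B$ corresponding to the eigenvalue $c-\mu$ of $A$. The Perron eigenvalue $\lambda$ of $A$, being the largest, corresponds to the \emph{smallest} eigenvalue $c-\lambda$ of $B$. Since $B$ has signature $(n-1,1)$, it has exactly one negative eigenvalue, and that negative eigenvalue is in particular the smallest one; hence the smallest eigenvalue of $B$ is the negative eigenvalue, and the Perron eigenvector of $A$ is precisely an eigenvector $o$ for the negative eigenvalue of $B$. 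By the Perron--Frobenius statement, $o$ can be taken to have all coordinates strictly positive, and any eigenvector for this (simple) eigenvalue is a scalar multiple of it, so all coordinates of $o$ share a common sign. This is the assertion.

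The one point requiring a little care — and the main obstacle — is the interface between the Coxeter-theoretic notion of irreducibility in Assumption~\ref{assume} and the matrix-theoretic notion of irreducibility needed for Perron--Frobenius: I must check that ``$B$ is not block-diagonal up to permutation of the basis'' forces the directed graph on $\{1,\dots,n\}$ with an edge $i\to j$ whenever $A_{ij}\ne 0$ to be strongly connected. Because $A$ is symmetric with the same off-diagonal support as $B$, this graph is undirected, and block-diagonalizability of $B$ up to permutation is equivalent to disconnectedness of that graph; so irreducibility of $B$ in the Coxeter sense is equivalent to connectedness, hence to matrix irreducibility of $A$. A secondary subtlety is that I should note signature $(n-1,1)$ guarantees the negative eigenvalue is simple as well, which matches the simplicity of the Perron eigenvalue and ensures the eigenvector $o$ is well defined up to scalar, so the phrase ``all coordinates of $o$ have the same sign'' is unambiguous.
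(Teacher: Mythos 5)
Your proof is correct and follows essentially the same route as the paper: shift $B$ to an irreducible non-negative matrix (the paper uses $I-B$, you use $cI-B$ with $c$ large) and apply Perron--Frobenius, identifying the Perron eigenvector with an eigenvector for the smallest, hence negative, eigenvalue of $B$. Your extra care about the equivalence of Coxeter irreducibility with matrix irreducibility and about simplicity of the eigenvalue only makes explicit what the paper leaves implicit.
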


\begin{proof}
	This follows from Perron-Frobenius theorem for irreducible 
	non-negative matrices.
	Let $I$ be the identity matrix of rank $n$.
	Then $-B+I$ is irreducible and non-negative.
	Note that since $-B+I$ and $B$ are symmetric, all eigenvalues are real.
	By Perron-Frobenius theorem, we have a positive eigenvalue $\lambda'$ of $-B+I$
	such that $\lambda'$ is the maximum of eigenvalues of $-B+I$ and 
	each entry of corresponding eigenvector $u$ is positive.
	On the other hand, for each eigenvalue $a$ of $B$ there exists an eigenvalue $b$ of $-B+I$
	such that $a = 1-b$.
	Let $\lambda$ be the negative eigenvalue of $B$.
	Then an easy calculation gives $\lambda = 1-\lambda'$.
	Therefore $\R u = \R o$.
\end{proof}

We fix $o \in V$ be the eigenvector corresponding to the negative eigenvalue of $B$ 
whose euclidean norm equals to $1$ and all coordinates are positive.
Hence if we write $o$ in a linear combination $o = \sum_{i=1}^n o_i \alpha_i$ of $\Delta$
then $o_i > 0$.
Given $v \in V$, we define $|v|_1$ by $\sum_{i=1}^no_iv_i$ if $v = \sum_{i=1}^n v_i \alpha_i$. 
Note that a function $| \cdot |_1 : V \to \R$ is actually a norm in the set of vectors 
having nonnegative coefficients. 
Let $V_i=\{ v \in V\ \vert\ |v|_1=i\}$, where $i=0,1$. 
For $v \in V \setminus V_0$, 
we write $\widehat{v}$ for the ``normalized'' vector $\frac{v}{|v|_1} \in V_1$. 
We also call $o$ the normalized eigenvector (corresponding to the negative eigenvalue of $B$).
Also for a subset $A \subset V \setminus V_0$, 
we write $\widehat{A}=\{\widehat{a}\ \vert\ a\in A\}$.
We notice that $B(x,\alpha) = |\alpha|_1B(x,\widehat{\alpha})$ hence 
the sign of $B(x,\alpha)$ equals to the sign of $B(x,\widehat{\alpha})$ for any 
$x \in V$ and $\alpha \in \Delta$.

We denote by $q(v)=B(v,v)$ for $v \in V$. 
Let $Q=\{v \in V\ \vert\  q(v)=0\}$, $Q_- = \{ v \in V\ \vert\ q(v)<0\}$ then we have
\[
	\widehat{Q} = V_1 \cap Q,\ \ \ \widehat{Q_-} = V_1 \cap Q_-.
\]
We see that $\widehat{Q}$ is an ellipsoid since $B$ has the signature $(n-1,1)$.

\begin{rem}\label{zerodake} 
	We have 
	\[
		W(V_0) \cap Q = \{{\bf 0}\},
	\] 	
	where {\bf 0} is the origin of $\R^n$.
	To see this we only need to verify that $V_0 \cap Q = \{{\bf 0}\}$ since $Q$ is invariant under
	$B$-reflections.
	We notice that $V_0 = \{v \in V\ \vert\ B(v,o) = 0\}$.
	For $i = 1,\ldots,n-1$, 
	let $p_i$ be an eigenvector of $B$ corresponding to a positive eigenvalue $\lambda_i$.
	For any $v \in V_0$, we can express $v$ in a linear combination 
	$v = \sum_i^{n-1} v_i p_i$ since $B(v,o) = 0$.
	Then we have $B(v,v) = \sum_i^{n-1} \lambda_i v_i^2 \|p_i\|^2\ge 0$
	where $\|*\|$ denotes the euclidean norm.
	Since $\lambda_i > 0$ for $i = 1,\ldots,n-1$, we have $B(v,v) = 0$ if and only if $v={\bf 0}$. 
\end{rem}

For $v \in V\setminus V_0$, we define $\widehat{v} = \frac{v}{|v|_1}$.
Then the {\it normalized action} of $w \in W$ is given by 
\[
	w \cdot v := \widehat{w(v)},\ \  v \in V \setminus W(V_0).
\]
The normalized action preserves the region
\[
	D := \widehat{Q_-} = V_1 \cap Q_-.
\]
This is an open set in $V_1$ with the subspace topology and $\partial D = \widehat{Q}$.
Then by Remark \ref{zerodake}, 
the normalized action is a continuous action on $D$.
The region $D$ may protrude from the convex hull $\conv(\widehat{\Delta})$ of $\widehat{\Delta}$.
In that case letting $R := D \setminus \conv(\widehat{\Delta})$,
we can restrict the normalized action on 
\[
	D' := D \setminus \bigcup_{w \in W} w \cdot R,
\]
since $\bigcup_{w \in W} w \cdot R$ is invariant under the normalized action of $W$.

\begin{notation}
	Our main purpose in this paper is to study the limit set of $W$ for the normalized action.
	Therefore we work in $V_1$ with the subspace topology unless otherwise indicated.
	For a subset $A \subset V_1$, $\inte(A)$, $\overline{A}$ and $\conv(A)$ denote
	the interior, the closure and the convex hull of $A$ each other
	with respect to the subspace topology on $V_1$.
\end{notation}

\subsection{The word metric}
Let $G$ be a finitely generated group and fix a generating set $S$.
Then all elements in $G$ can be represented by a product of elements in $S$.
We say such a representation to be a {\it word} and 
let $\langle S \rangle$ be the set of all words generated by $S$.
For a word $w \in \langle S \rangle$
we define the {\it word length} $\ell_{S}(w)$ as 
the number of generators $s \in S$ in $w$.
We denote the minimal word length of words representing $g \in G$ by $|g|_S$. 
An expression of $g$ realizing $|g|_S$ is called {\it the reduced expression}
or {\it the geodesic word}.
Using the word length, we can define so-called {\it the word metric} with respect to $S$ on $G$, 
i.e, for $g,h \in G$, their distance is $|g^{-1}h|_S$.
In this paper for a Coxeter system $(W,S)$ we always work on the generating set $S$.
For this reason we omit the subscript and denote the word length 
(resp. the minimal word length) for $S$ simply by $\ell$ (resp. $|*|$).

The word metric on a group $G$ with respect to a generating set $S$ 
can be regarded as a metric on the Cayley graph of $G$ with respect to $S$.

\section{The Hilbert metric}
For four vectors $a,b,c,d \in V$ with $c-d, b-a \not= 0$, 
we define {\it the cross ratio} $[a,b,c,d]$ with respect to $B$ by 
\[
	[a,b,c,d] := \frac{\|y-a\|\ \|x-b\|}{\|y-b\|\ \|x-a\|},
\]
where $\|*\|$ denotes the Euclidean norm.
Using this we obtain a distance $d$ on $D$ as follows.
For any $x,y \in D$, take $a,b \in \partial D$ 
so that the points $a,x,y,b$ lie on the segment connecting $a,b$ in this order. 
Then $y-b,x-a \not= 0$.
We define
\[
	d(x,y) := \log [a,x,y,b],
\]
and call this {\it the Hilbert metric} on $D$.

We coll.ect some geometric properties of $(D,d)$. 
Since $D$ is an ellipsoid, it is well-known that $(D,d)$ is a proper 
(i.e. every closed ball is compact) and 
complete (uniquely) geodesic space (for example see \cite{m}).
More precisely geodesics in this space are Euclidean lines.
Furthermore we see that$(D,d)$ is isometric to the hyperbolic space.
To prove this we need following two lemmas.

\begin{lem}\label{henkan}
	Any linear transformation $L:V\ar V$ preserves the cross ratio $[a,x,y,b]$
	for any $\{a,x,y,b\}$ which are collinear in this order
	(namely $x,y$ are on the segment connecting $a$ and $b$) 
	and $a,b \in \partial D$, $x,y \in D$.
\end{lem}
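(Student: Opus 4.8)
The statement to prove is that any linear transformation $L : V \to V$ preserves the cross ratio $[a,x,y,b]$ whenever $a,x,y,b$ are collinear in that order with $a,b \in \partial D$ and $x,y \in D$. I should be a bit careful here: the cross ratio as defined uses the Euclidean norm and a genuinely \emph{linear} map need not preserve Euclidean distances, so the statement cannot be true literally for every linear $L$ without some restriction (e.g. $L$ invertible, or the point configuration staying collinear and in the right order). The sensible reading, and the one I would prove, is the standard projective fact: for an invertible linear $L$, the four points $La, Lx, Ly, Lb$ are again collinear in the same order and $[La,Lx,Ly,Lb] = [a,x,y,b]$. I would first record this normalization and then proceed.

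Let me think about the proof. Since $x$ lies on the segment from $a$ to $b$, write $x = (1-s)a + sb$ with $s \in (0,1)$, and similarly $y = (1-t)a + tb$ with $t \in (0,1)$ and $s < t$ (this encodes the ordering $a,x,y,b$). The first step is to express the four Euclidean distances appearing in $[a,x,y,b] = \dfrac{\|y-a\|\,\|x-b\|}{\|y-b\|\,\|x-a\|}$ in terms of $s$, $t$, and $\|b-a\|$: one computes $\|y-a\| = t\|b-a\|$, $\|x-b\| = (1-s)\|b-a\|$, $\|y-b\| = (1-t)\|b-a\|$, and $\|x-a\| = s\|b-a\|$. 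Hence the common factor $\|b-a\|$ cancels and
\[
[a,x,y,b] = \frac{t\,(1-s)}{(1-t)\,s},
\]
so the cross ratio depends only on the affine parameters $s,t$, not on the embedding in $V$ at all. The second step is to observe that an invertible linear map $L$ (more generally any affine map) sends $x = (1-s)a + sb$ to $Lx = (1-s)La + sLb$ with the \emph{same} parameter $s$, and likewise for $y$; it also sends the segment $[a,b]$ onto the segment $[La,Lb]$ preserving order. Therefore the parameters $s,t$ attached to $Lx,Ly$ on the segment $[La,Lb]$ coincide with those of $x,y$ on $[a,b]$, and plugging into the displayed formula gives $[La,Lx,Ly,Lb] = \dfrac{t(1-s)}{(1-t)s} = [a,x,y,b]$.

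The only genuine subtlety — the ``hard part,'' such as it is — is the bookkeeping about collinearity and ordering: one must note that the hypotheses $a,b \in \partial D$, $x,y \in D$ are used only to guarantee that $a,x,y,b$ sit on a segment in that order (so that the parameters lie in $(0,1)$ with $s<t$), and that linearity preserves this. Once the cross ratio is rewritten in the affine-invariant form above, invariance under $L$ is immediate. I would therefore present the parametrization, the distance computation giving the closed-form $t(1-s)/((1-t)s)$, and the one-line observation that affine (in particular linear) maps preserve the parameters $s,t$.
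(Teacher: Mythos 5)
Your proof is correct and follows essentially the same route as the paper: both reduce the cross ratio to scalar proportionality factors along the line (your affine parameters $s,t$ giving $[a,x,y,b]=\frac{t(1-s)}{(1-t)s}$ correspond exactly to the paper's constants $k,l$ with $x-b=k(y-b)$, $y-a=l(x-a)$ and $[a,x,y,b]=kl$), and then observe that a linear map preserves these factors, so the Euclidean norms cancel. Your remark that one should implicitly assume $L$ does not collapse the line (e.g. $L$ invertible, as in the paper's application) is a fair caveat, shared equally by the paper's own argument.
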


\begin{proof}
	For any linear transformation $L$ and any metric space $(X,d_X)$ with the Hilbert metric
	in $\R^n$
	we can define the Hilbert metric $d_{L(X)}$ in $L(X)$ since $L(X)$ is bounded and convex.
	Then we have $\|y-b\| \le \|x-b\|$,\ $\|x-a\| \le \|y-a\|$
	with respect to the Euclidean norm $\|\cdot\|$. 
	From the collinearity, each pair $\{y-b,x-b\}$ and $\{x-a,y-a\}$ have same direction
	respectively. 
	So there exist constants $k,l \ge 1$ such that $x-b = k(y-b)$ and $y-a = l(x-a)$.
	Since any linear transformation sends lines to lines, we have
	\begin{align*}
		[L(a),L(x),L(y),L(b)] 
		&= \frac{\|L(y-a)\|\ \|L(x-b)\|}{\|L(y-b)\|\ \|L(x-a)\|}\\ 
		&= \frac{l\|L(x-a)\|\ k\|L(y-b)\|}{\|L(y-b)\|\ \|L(x-a)\|}\\
		&= lk \\
		&= \frac{l\|x-a\|\ k\|y-b\|}{\|y-b\|\ \|x-a\|}\\
		&= \frac{\|y-a\|\ \|x-b\|}{\|y-b\|\ \|x-a\|}
	 	=[a,x,y,b].
	\end{align*}
\end{proof}

\begin{lem}\label{1}
	Let $a_1,a_2,a_3,a_4$ be points in $V$ which are collinear 
	and $a_1-a_4 \not= 0$.
	Let $b_1,b_2,b_3,b_4 \in V$ satisfying 
	\begin{itemize}
		\item for each $i$, $b_i$ lies on a ray $R_i$
			  connecting $a_i$ and some point $p \in V$,
		\item four vectors $b_1,b_2,b_3,b_4$ are collinear and $b_1-b_4 \not= 0$.
	\end{itemize}
	Then we have 
	\[
		[a_1,a_2,a_3,a_4] = [b_1,b_2,b_3,b_4].
	\]
\end{lem}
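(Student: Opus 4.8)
The plan is to rewrite both cross ratios as one and the same expression in the angles subtended at the common point $p$; this is the classical fact that a central projection (perspectivity) preserves the cross ratio, and I would prove it by the elementary ``area of a triangle, computed two ways'' trick. Let $\ell$ be the line through $a_1,a_2,a_3,a_4$ and $\ell'$ the line through $b_1,b_2,b_3,b_4$. We may assume $p\notin\ell$: otherwise all of the rays $R_i$ lie on $\ell$ and the hypotheses impose no genuine relation among the $b_i$, a degenerate situation that does not occur in our applications. Let $h>0$ be the Euclidean distance from $p$ to $\ell$, and for $i\ne j$ let $\theta_{ij}\in(0,\pi)$ be the angle at $p$ in the triangle with vertices $p,a_i,a_j$. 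Equating the two standard expressions for the area of that triangle gives
\[
	\|a_i-a_j\| \;=\; \frac{\|p-a_i\|\,\|p-a_j\|\,\sin\theta_{ij}}{h}.
\]

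I would then substitute this into the definition of $[a_1,a_2,a_3,a_4]$. The factor $h$ and each factor $\|p-a_k\|$ occur with the same multiplicity in the numerator and in the denominator, so they cancel and leave
\[
	[a_1,a_2,a_3,a_4]
	\;=\; \frac{\|a_3-a_1\|\,\|a_2-a_4\|}{\|a_3-a_4\|\,\|a_2-a_1\|}
	\;=\; \frac{\sin\theta_{13}\,\sin\theta_{24}}{\sin\theta_{34}\,\sin\theta_{12}},
\]
an expression depending only on the four lines $R_1,\dots,R_4$ through $p$. Running the identical computation with $b_1,b_2,b_3,b_4$ in place of $a_1,\dots,a_4$, I observe that since $b_i$ lies on the line through $p$ and $a_i$, the angle at $p$ in the triangle $p\,b_i\,b_j$ equals $\theta_{ij}$ or its supplement, so it has the same sine. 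Hence the same cancellation produces
\[
	[b_1,b_2,b_3,b_4] \;=\; \frac{\sin\theta_{13}\,\sin\theta_{24}}{\sin\theta_{34}\,\sin\theta_{12}},
\]
and comparing the last two displays yields the claimed equality.

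The one thing requiring care is non-degeneracy: one needs $p\notin\ell$ (so that $h>0$) and the pairs of points entering the distances to be distinct (so that each $\theta_{ij}\in(0,\pi)$ and $\sin\theta_{ij}>0$), which holds under the standing assumption that $[a_1,a_2,a_3,a_4]$ and $[b_1,b_2,b_3,b_4]$ are defined and that $p$ lies off the lines $\ell$ and $\ell'$. I expect no genuine obstacle beyond this point; the remainder is the routine bookkeeping of cancelling the common factors $h$ and $\|p-a_k\|$ in the ratio.
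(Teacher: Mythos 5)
Your argument is correct, and it establishes exactly the classical fact the paper needs (invariance of the cross ratio under the perspectivity centred at $p$), but by a genuinely different elementary computation than the paper's own proof. The paper works affinely: it draws two auxiliary lines through $b_2$ and $b_3$ parallel to the line $\ell_0$ carrying the $a_i$, intersects them with the rays $R_i$, and then combines the homothety centred at $p$ between these parallels (the constant $k$) with two pairs of similar triangles sharing the vertices $b_1$ and $b_4$, plus the proportionality $B_i-B_j=m(a_i-a_j)$, to transfer the four ratios back to the $a_i$. You instead use the triangle-area (law of sines) identity $\|a_i-a_j\|=\|p-a_i\|\,\|p-a_j\|\sin\theta_{ij}/h$ to rewrite each chord length, after which the factors $h$ and $\|p-a_k\|$ cancel and the cross ratio becomes $\sin\theta_{13}\sin\theta_{24}/(\sin\theta_{34}\sin\theta_{12})$, a function of the pencil of lines through $p$ only; the same computation for the $b_i$ finishes the proof. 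Your route is shorter and makes the projective nature of the statement transparent, at the cost of introducing trigonometry; the paper's route stays within similar-triangle reasoning. Both arguments need the same implicit nondegeneracy, which you correctly isolate: $p$ must lie off the lines $\ell$ and $\ell'$ and the points entering the denominators must be distinct (so each $\theta_{ij}$ is defined). These conditions do hold where Lemma \ref{1} is applied in the proof of Lemma \ref{hyp}, since there $p$ is the origin, the $a_i$ lie on a chord of $g(D)$ inside an affine hyperplane avoiding the origin, and the $b_i$ lie in the affine hyperplane $v_n=1$; so your degeneracy caveat is harmless, just as the corresponding tacit assumptions are in the paper's proof.
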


\begin{proof}
	From the assumption, 
	all eight points are located on the two dimensional subspace $\mathrm{Span}(a_1-p,a_4-p)$ 
	spanned by $a_1-p$ and $a_4-p$ in $V$.
	
	Let $\ell_0$ be a line in $\mathrm{Span}(a_1-p,a_4-p)$ through $a_1$ and $a_4$.
	Consider two lines $\ell_2$ and $\ell_3$ in $\mathrm{Span}(a_1-p,a_4-p)$ parallel to $\ell_0$ 
	with $b_2 \in \ell_2$ and $b_3 \in \ell_3$.
	Let $B_i \in R_i \cap \ell_2$ and $B'_i \in R_i \cap \ell_3$ for $i = 1, 2, 3, 4$. 
	Then we have $b_2 = B_2$ and $b_3 = B'_3$, and
	there is a positive constant $k$ such that $B'_i -p = k(B_i - p)$ for $i = 1, 2, 3, 4$.
	Since two triangles with vertices $\{b_4,b_2,B_4\}$ and $\{b_4,b_3,B'_4\}$ are similar,
	\[
		\frac{\|b_2-b_4\|}{\|b_3-b_4\|} = \frac{\|B_2-B_4\|}{\|B'_3-B'_4\|}.
	\]
	By the similar reason, we also have
	\[
		\frac{\|b_2-b_1\|}{\|b_3-b_1\|} = \frac{\|B_2-B_1\|}{\|B'_3-B'_1\|}.
	\]
	In addition since $\ell_0$ and $\ell_2$ are parallel, there exists 
	a constant $m$ so that
	\[
		B_i - B_j = m(a_i - a_j),
	\]
	for all $i,j \in \{1,2,3,4\}$.
	Therefore, we obtain
	\begin{align*}
		[b_1,b_2,b_3,b_4] &= \frac{\|b_3-b_1\|\|b_2-b_4\|}{\|b_3-b_4\|\|b_2-b_1\|}
						   = \frac{\|B'_3-B'_1\|\|B_2-B_4\|}{\|B'_3-B'_4\|\|B_2-B_1\|}\\
						  &= \frac{\|k(B_3-B_1)\|\|B_2-B_4\|}{\|k(B_3-B_4)\|\|B_2-B_1\|}
						   = \frac{\|B_3-B_1\|\|B_2-B_4\|}{\|B_3-B_4\|\|B_2-B_1\|}\\
						  &= \frac{\|a_3-a_1\|\|a_2-a_4\|}{\|a_3-a_4\|\|a_2-a_1\|}
						   = [a_1,a_2,a_3,a_4],
	\end{align*}
	which implies what we wanted.
\end{proof}

\begin{lem}\label{hyp}
	The metric space $(D,d)$ is isometric to $\h^{n-1}$.
\end{lem}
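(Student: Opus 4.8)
The plan is to realise $(D,d)$ as the Beltrami--Klein (projective) model of $\h^{n-1}$. In that model one takes the open unit ball $\mathbb{B}:=\{y\in\R^{n-1}:\|y\|<1\}$, sitting inside the affine hyperplane $\{x_n=1\}\subset\R^n$, with the metric $d_{\mathbb B}(x,y)=\log[a,x,y,b]$ where $a,b$ are the two endpoints of the chord through $x,y$ taken in the order $a,x,y,b$ --- that is, the Hilbert metric of \S 4 applied to $\mathbb B$. It is classical (radial projection of the sheet $\{x_1^2+\cdots+x_{n-1}^2-x_n^2=-1,\ x_n>0\}$ onto $\{x_n=1\}$ carries the hyperbolic metric to $d_{\mathbb B}$; cf.\ \cite{m}) that $(\mathbb B,d_{\mathbb B})$ is isometric to $\h^{n-1}$. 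So it is enough to produce a Hilbert isometry $D\to\mathbb B$, and Lemmas \ref{henkan} and \ref{1} are exactly the tools to assemble one from a linear map followed by a central projection.

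First I would normalise the form. By Assumption \ref{assume} the quadratic form $q(v)=B(v,v)$ has signature $(n-1,1)$, so by Sylvester's law of inertia there is a linear isomorphism $L:V\to V$ with $q(v)=L(v)_1^2+\cdots+L(v)_{n-1}^2-L(v)_n^2$ for all $v$; after composing with $x_n\mapsto-x_n$ if needed we may assume $L(o)_n>0$. Put $q_0(x)=x_1^2+\cdots+x_{n-1}^2-x_n^2$ and $H:=L(V_1)$, an affine hyperplane with $\mathbf 0\notin H$ (since $\mathbf 0\notin V_1$). Then $L$ carries $D=V_1\cap Q_-$ to $H\cap\{q_0<0\}$ and $\partial D=V_1\cap Q$ to $H\cap\{q_0=0\}$. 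By Remark \ref{zerodake}, $q$ is positive definite on $V_0$, so $V_1$ meets $\overline{Q_-}$ in the bounded set $\overline D$ and in a single nappe; transporting by $L$, the connected set $L(D)$ lies in the nappe $\{q_0<0,\ x_n>0\}$ (by the choice of $L$) and is a bounded convex region of $H$ with ellipsoidal boundary $L(\partial D)$. Now let $\pi(z)=z/z_n$ be central projection from $\mathbf 0$ onto $\{x_n=1\}$; on $L(D)$ it is well defined and injective (a ray from $\mathbf 0$ meets the hyperplane $H$ in at most one point, and $z_n>0$ there), it carries lines of $H$ to lines of $\{x_n=1\}$, and it maps $L(D)$ onto $\{q_0<0,\ x_n=1\}=\mathbb B$ with $\pi(L(\partial D))=\{q_0=0,\ x_n=1\}=\partial\mathbb B$.

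Finally I would check that $\pi\circ L$ is a Hilbert isometry. Fix $x,y\in D$ and choose $a,b\in\partial D$ collinear with $x,y$ in the order $a,x,y,b$. Then $L(a),L(x),L(y),L(b)$ are collinear in the same order with $L(a),L(b)\in\partial(L(D))$, so Lemma \ref{henkan} gives $[a,x,y,b]=[L(a),L(x),L(y),L(b)]$, hence $d(x,y)=d_{L(D)}(L(x),L(y))$. Now apply Lemma \ref{1} with these four points as the $a_i$, with $p=\mathbf 0$ and with $b_i$ their images under $\pi$ (these lie on the rays from $\mathbf 0$ through the $a_i$, are collinear in $\{x_n=1\}$ because $\pi$ sends their common line --- which does not pass through $\mathbf 0$ --- to a line, and appear in the same order with the extreme two on $\partial\mathbb B$): it yields $[L(a),L(x),L(y),L(b)]=[\pi L(a),\pi L(x),\pi L(y),\pi L(b)]$, so that $d_{L(D)}(L(x),L(y))=d_{\mathbb B}(\pi L(x),\pi L(y))$. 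Chaining the equalities, $\pi\circ L:(D,d)\to(\mathbb B,d_{\mathbb B})$ is a bijective isometry, and composing with the Klein-model identification proves the lemma. The step I would treat most carefully is the projective bookkeeping: that Remark \ref{zerodake} genuinely forces $L(D)$ and $\pi(L(D))$ to be bounded convex cross-sections with the correct (ellipsoidal, then spherical) boundaries --- so that the Hilbert metric is defined and chord endpoints exist --- and that at each stage the hypotheses of Lemmas \ref{henkan}--\ref{1}, namely collinearity, the ordering $a,x,y,b$, and the extreme points lying on the boundary, are in fact met; the identification $(\mathbb B,d_{\mathbb B})\cong\h^{n-1}$ is the one external fact I would quote rather than reprove.
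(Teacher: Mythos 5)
Your proof is correct and follows essentially the same route as the paper: a linear change of coordinates normalizing $B$ to the standard form of signature $(n-1,1)$ (the paper builds it explicitly by orthogonal diagonalization, you invoke Sylvester's law), followed by the central projection $v \mapsto v/v_n$ onto the Klein model, with Lemmas \ref{henkan} and \ref{1} giving preservation of the cross ratio and the classical identification of the projective model with $\h^{n-1}$ quoted at the end. Your extra care about boundedness, the choice of nappe, and bijectivity of the projection only fills in details the paper dismisses as obvious.
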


\begin{proof}
	Let $A$ be a diagonal matrix $(1,\ldots,1,-1)$.
	We consider the region 
	\[
		\widetilde{D} = \{v \in V\ \vert\ A(v,v) < 0, v_n=1\}
	\] 
	where $A(,)$ is the bilinear form defined by $A$ and $v_n$ is the $n$-th coordinate of $v$.
	The region $\widetilde{D}$ can be regard as a hyperbolic space of the projective model which 
	is equipped with the Hilbert metric $d_A$ defined in the same way as $d$.

	Since $B$ is a symmetric bi-linear form, we can diagonalize the Gram matrix 
	$(B(\alpha_i,\alpha_j))_{i,j}$ by an orthogonal matrix $L$.
	Let $\{\lambda_1,\ldots,\lambda_{n-1},-\lambda_n\}$ ($\lambda_i>0$) be 
	the eigenvalues of $B$ and let $L'$ be a diagonal matrix 
	$\left(\frac{1}{\sqrt{\lambda_1}},\ldots,\frac{1}{\sqrt{\lambda_n}}\right)$.
	Then we have ${}^t(LL') B (LL') = A$
	where ${}^t M$ denotes the transpose of a matrix $M$.
	We set a linear transformation $g(v) = {L'}^{-1}L^{-1}v$ for $v \in V$.
	Then we notice that $g(D) \subset \{v\in V\ \vert\ 0<v_n, A(v,v)<0\}$.
	We define a projection $p : g(D)\ar \widetilde{D}$ by $v \mapsto v/v_n$.
	Obviously $p\circ g$ is a bijection.
	Take any $x,y \in D$ and $a,b \in \partial D$ such that $d_D(x,y) = [a,x,y,b]$.
	Then by Lemma \ref{henkan} we have $[a,x,y,b]= [g(a),g(x),g(y),g(b)]$.
	Adding to this by Lemma \ref{1} we have 
	$[g(a),g(x),g(y),g(b)] = [p\circ g(a),p\circ g(x),p\circ g(y),p\circ g(b)]$.
	Thus $p\circ g$ gives an isometry from $(D,d)$ to $(\widetilde{D},d_A)$.
\end{proof}

By this lemma we see that the space $D$ is a proper (complete) CAT(0) space
(see \cite[PART II.]{BH}).
Thus its {\it CAT(0) boundary} $\partial_I D$ (see \cite[II.8.1]{BH}) 
is homeomorphic to the sphere $S^{n-2}$,
hence $\partial_I D \simeq \partial D$ where $\partial D$ is the Euclidean boundary of $D$.

\section{Isometric action on $(D,d)$}
\subsection{$W$ acts isometrically}
We define two open sets (with respect to subspace topology of $V_1$)
\[
	K := \{v \in D \ \vert\ \forall \alpha \in \Delta, B(\alpha,v) < 0\}
	\quad \quad \text{and} \quad \quad 
	K' := K \cap \inte(\conv(\widehat{\Delta})).
\]
For $\alpha \in \Delta$ we set 
$P_\alpha = \{v \in V_1\ \vert\ \text{$\alpha$-th coordinate\ of\ $v$\ is\ $0$}\}$
and $H_\alpha = \{v \in V_1\ \vert\ B(v,\alpha) = 0\}$.
We define 
\[
	\PP = \{v \in V_1 \ \vert\ \forall \alpha \in \Delta, B(\alpha,v) < 0\}
	\quad \quad \text{and}\quad \quad
	\PP' = \PP \cap \inte(\conv(\widehat{\Delta})).
\] 
Since $\PP$ (resp. $\PP'$) is bounded by finitely many $n-1$ dimensional subspaces 
$\{H_\alpha\ \vert\ \alpha \in \Delta\}$ 
(resp. $\{H_\alpha\ \vert\ \alpha \in \Delta\}$ and $\{ P_\alpha \ \vert\ \alpha \in \Delta\}$),
actually $\overline{\PP}$ (resp. $\overline{\PP'}$) is a polyhedron.
In general, $\PP$ is not a simplex.

\begin{rem}[\cite{m}]
	We have the following:
	\begin{itemize}
	\item
	$K = \PP \cap D$ and
	$K' = K \cap \inte(\conv(\widehat{\Delta})) = \PP' \cap D.$
	\item
	$K'$ (hence $K$) is not empty.
	\end{itemize}
\end{rem}

\begin{dfn}\label{kihonryoiki}
	We assume that a group $G$ acts on a metric space $X$ isometrically.  
	We denote the action $G \curvearrowright X$ by $g.x$ for $g \in G$ and $x \in X$. 
	\begin{itemize}
	\item 
	An open set $A \subset X$ is a {\it fundamental region} if  
	$\overline{G.A} = X$ and $g.A \cap A = \emptyset$ for any $g \in G$ where
	$\overline{G.A}$ is topological closure of $G.A$.
	\item
	An open set $A \subset X$ is the {\it Dirichlet region} at $o \in A$ if $A$ equals to the set
	\[	
	\{x \in D\ \vert\ 
		d(o,x) < d(o,w\cdot x)\ \text{for}\ w \in W \setminus \{id\}\}.
	\]
	\item
	The action $G \curvearrowright X$ is {\it discrete} or 
	{\it properly discontinuous} if for any compact set $K$ the set
	\[
		\{g\in G \ \vert\ g(K) \cap K \neq \emptyset\} 
	\]
	is finite.
	\end{itemize}
\end{dfn}

In \cite{m}, following propositions are proved (\cite[Lemma 4.11]{m}).

\begin{prop}\label{isom}
	Let $W$ be a Coxeter group of type $(n-1,1)$.
	The normalized action of $W$ on $D$ is isometric for the Hilbert metric $d$
	and properly discontinuous.
\end{prop}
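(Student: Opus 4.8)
The plan is to prove the two assertions separately: the isometry is a short cross-ratio computation, while the proper discontinuity reduces to the discreteness of the linear representation of $W$.

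For the isometry I would view the normalized action of $w\in W$ as the linear map $w$ on $V$ followed by the central projection $v\mapsto\widehat v=v/|v|_1$ from the origin onto $V_1$. Since $w$ preserves $B$ it preserves $q$, $Q$ and $Q_-$, and by Remark~\ref{zerodake} it carries $\overline D=D\cup\partial D$ into $V\setminus W(V_0)$, so this projection is defined on $w(\overline D)$; moreover the normalized action is already known to preserve $D$, hence also $\overline D$ and $\partial D$. Now fix $x,y\in D$ and take $a,b\in\partial D$ with $a,x,y,b$ collinear in this order. Then $w(a),w(x),w(y),w(b)$ are collinear in this order, so Lemma~\ref{henkan} gives $[a,x,y,b]=[w(a),w(x),w(y),w(b)]$. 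The line through the four image points misses the origin (two of them lie in one nappe of $Q$), so Lemma~\ref{1} applied with $p=\mathbf 0$ and the rays from $\mathbf 0$ through those points — legitimate since $|w(v)|_1>0$ for $v\in\overline D$, as $w$ preserves the nappe containing $o$ — yields $[w(a),w(x),w(y),w(b)]=[w\cdot a,w\cdot x,w\cdot y,w\cdot b]$. Since $w\cdot a,w\cdot b\in\partial D$ and $w\cdot x,w\cdot y$ lie between them on their common line in $V_1$, the definition of the Hilbert metric gives
\[
	d(w\cdot x,w\cdot y)=\log[w\cdot a,w\cdot x,w\cdot y,w\cdot b]=\log[a,x,y,b]=d(x,y).
\]
The only points needing care are the sign-and-ordering bookkeeping: that the image points genuinely lie on the chosen rays, and that the collinear ordering of the four relevant points is preserved up to a global reversal, which leaves the cross ratio unchanged.

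For the proper discontinuity, the first step is to transport everything to a standard model. By Lemma~\ref{hyp} the isometry $(D,d)\to(\widetilde D,d_A)\cong\h^{n-1}$ is itself a linear map followed by central projection from $\mathbf 0$, so it conjugates the normalized $W$-action on $D$ to the restriction to $\h^{n-1}$ of the linear action of the image of $W$ in $O(B)\cong O(n-1,1)$. This linear representation $\rho\colon W\to O(B)$ is faithful (Tits' theorem; for the based root system see \cite{hlr}), so $W$ is realized as a group of isometries of $\h^{n-1}$. Since $\mathrm{Isom}(\h^{n-1})$ acts properly on $\h^{n-1}$ and proper discontinuity is invariant under the homeomorphism $(D,d)\cong\h^{n-1}$, it suffices to prove that $\rho(W)$ is a discrete subgroup of $O(B)$.

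For the discreteness I would use the reflection-chamber structure. Pick a point $z$ in the nonempty open set $K\subset\PP$; then $B(\alpha,z)<0$ for every $\alpha\in\Delta$, so $z$ lies in the open fundamental chamber of the linear $W$-action and $\mathrm{Stab}_W(z)=\{\id\}$. The $W$-translates of the fundamental chamber have pairwise disjoint interiors, their union is the Tits cone, and in the interior of the Tits cone this family of chambers is locally finite; granting that the cone over $D$ — or at least over the restricted region $D'$ on which we actually work — lies in that interior, it follows that the orbit $W\cdot z$ has no accumulation point in $D$, since a neighbourhood of such a point would have to meet infinitely many distinct chambers. Combined with the properness of the orbit map $\mathrm{Isom}(\h^{n-1})\to\h^{n-1}$ and $\mathrm{Stab}_W(z)=\{\id\}$, this yields discreteness of $\rho(W)$, hence proper discontinuity of the normalized action. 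I expect the main obstacle to be precisely this last geometric input — the local finiteness of the chamber pattern together with the inclusion of (the cone over) $D'$ in the interior of the Tits cone: when affine special subgroups are present the chambers degenerate near the boundary points fixed by those subgroups, so controlling them rests on the based-root-system analysis of \cite{hlr} rather than on a cocompactness argument, which is exactly what allows the present paper to go beyond the setting of \cite{m}.
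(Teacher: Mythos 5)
Your isometry half is fine, and there is nothing in the paper to compare it with in detail: the paper does not prove Proposition \ref{isom} at all, it imports it from \cite[Lemma 4.11]{m}. Your argument (apply the linear map, use Lemma \ref{henkan}; then project centrally from $\mathbf 0$ to $V_1$ and use Lemma \ref{1}, checking $|w(v)|_1>0$ on the cone over $\overline D$ via Remark \ref{zerodake} and the fact that $W$ preserves the nappe of $o$) is exactly the toolkit the paper itself deploys in the proof of Lemma \ref{hyp}, so that part is correct and in the spirit of the source.

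The gap is in the proper-discontinuity half. Your discreteness argument rests on two inputs that you explicitly only ``grant'': that the cone over $D$ (or at least over $D'$) lies in the interior of the Tits cone, and that the chamber decomposition is locally finite there. That is precisely the nontrivial geometric content in the Lorentzian setting, it is not proved in your sketch, and the fallback ``at least over $D'$'' cannot rescue the statement: Proposition \ref{isom} asserts proper discontinuity on all of $D$, and in case (iii) of \S\ref{honbun} the set $D\setminus D'$ is nonempty. (You also cannot quietly borrow Lemma \ref{kihon} here, since that too is only cited from \cite{m} and is of essentially the same depth as the statement you are proving.) The good news is that the input you are missing is stronger than what you need. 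Discreteness of $\rho(W)$ in $GL(V)$ already follows from the disjointness of the translates of the open chamber $\{v\in V\mid B(v,\alpha)<0\ \forall\alpha\in\Delta\}$ under nontrivial elements, which is Tits' fundamental-domain theorem and holds for based root systems (see \cite{hlr} and Vinberg's theory): if $\rho(w_n)\to g$ with the $w_n$ pairwise distinct, then $\rho(w_{n+1}^{-1}w_n)\to\id$ with $w_{n+1}^{-1}w_n\neq\id$ infinitely often, and applying these elements to a point $z$ of the open chamber eventually places a point of some $w(\inte\PP)$-cone, $w\neq\id$, inside the chamber neighbourhood of $z$, a contradiction; no local finiteness and no containment of $D$ in the Tits cone is needed. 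Combined with the properness of the $O(B)$-action on $\h^{n-1}\cong D$ (point stabilizers are compact), which you already invoke via Lemma \ref{hyp}, this gives proper discontinuity on all of $D$. So the architecture of your proof is right, but as written the key discreteness step is assumed rather than proved, and the proposed retreat to $D'$ proves a weaker statement than the proposition.
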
 

\begin{lem}\label{kihon}
	For any $x \in K$, $K$ is the Dirichlet region at $x$ and a fundamental region.
\end{lem}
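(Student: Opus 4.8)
The plan is to establish the two assertions — that $K$ is the Dirichlet region at any of its points and that it is a fundamental region — by exploiting the standard Coxeter-theoretic description of $K$ as a chamber cut out by the reflecting hyperplanes $H_\alpha$, $\alpha \in \Delta$. First I would recall that by the previous remark $K = \PP \cap D$, where $\PP = \{v \in V_1 : B(\alpha,v) < 0 \text{ for all } \alpha \in \Delta\}$, so $K$ is exactly the intersection of $D$ with the open polyhedral cone bounded by the walls $H_\alpha$. The key geometric fact is that, for a fixed $x \in K$, the hyperplane $H_\alpha$ is precisely the perpendicular bisector (in the Hilbert metric $d$, which by Lemma \ref{hyp} is the hyperbolic metric) of the pair $x$ and $s_\alpha \cdot x$: indeed $s_\alpha$ is a $B$-reflection, it preserves $B$ and hence $d$, it fixes $H_\alpha$ pointwise, and it reverses the side of $H_\alpha$ on which $x$ lies. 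Since hyperbolic space is CAT(0), the bisector of $x$ and $s_\alpha \cdot x$ is a totally geodesic hyperplane, and it must be $H_\alpha$.

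Next I would reduce the Dirichlet-region claim to the generators. Fix $x \in K$; I must show $K = \{y \in D : d(x,y) < d(x, w \cdot y) \text{ for all } w \in W \setminus \{\id\}\}$. For the inclusion $\subseteq$: if $y \in K$ and $w \neq \id$, write a reduced expression $w = s_{i_1}\cdots s_{i_k}$; a standard Coxeter-group argument (applied in the geometric representation, as in \cite{davis,h}) shows that the geodesic segment from $x$ to $w \cdot y$ crosses at least one wall $H_\alpha$ separating $x$ from $w \cdot y$, while $x$ and $y$ lie strictly on the same side of every wall. Comparing distances across the bisecting wall gives $d(x,y) < d(x, w\cdot y)$. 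For the reverse inclusion $\supseteq$: if $y \notin K$ and $y \notin \bigcup_{w} w \cdot R$ (i.e. $y$ lies in $D'$, where the action is defined), then $y$ lies in some wall-chamber $w \cdot K$ with $w \neq \id$, and then $w^{-1}\cdot y \in K$ so, applying the previous inclusion with the roles reversed, $d(x, w^{-1}\cdot y) < d(x, y)$, hence $y$ is not in the Dirichlet region. Here one should be slightly careful about points of $D$ lying on walls or outside $\conv(\widehat\Delta)$; the statement is really about the action on $D'$, and on $\partial D$-type boundary points there is nothing to prove.

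For the fundamental-region claim I would verify the two defining conditions of Definition \ref{kihonryoiki}. The condition $w \cdot K \cap K = \emptyset$ for $w \neq \id$ is immediate from the Dirichlet-region description: if $z$ were in both, then $d(x,z) < d(x, w^{-1}\cdot z)$ and simultaneously $d(x, w^{-1}\cdot z) < d(x,z)$ (applying the inclusion just proved to $w^{-1}\cdot z \in K$), a contradiction; alternatively it follows directly since $K$ is an open chamber for the reflection arrangement and distinct $w$ send it to distinct chambers. For $\overline{W \cdot K} = D$ (equivalently $D'$, the set on which the action is genuinely defined), I would argue that the chambers $\{w \cdot \overline{K}\}_{w \in W}$ tile the relevant region: any point of $D'$ lies in some closed chamber because the reflection hyperplanes are locally finite in $D$ (by Proposition \ref{isom}, the action is properly discontinuous, so only finitely many walls meet any compact set), so the complement of $\bigcup_w w \cdot K$ in $D'$ is the union of the walls, which has empty interior; taking closures gives $\overline{W \cdot K} \supseteq D'$, and the reverse inclusion is trivial.

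The main obstacle I anticipate is the bookkeeping in the first inclusion of the Dirichlet argument — turning the combinatorial statement "a reduced word for $w$ forces the geodesic $[x, w\cdot y]$ to cross a separating wall" into a clean metric inequality $d(x,y) < d(x, w\cdot y)$, and doing so uniformly in $w$. The cleanest route is probably an induction on $\ell(w)$: if $\ell(s_\alpha w) < \ell(w)$ for some simple $s_\alpha$, then $x$ and $w \cdot y$ are on opposite sides of $H_\alpha$ while $x$ and $s_\alpha w \cdot y$ are on the same side, so reflecting in $H_\alpha$ strictly decreases the distance, i.e. $d(x, s_\alpha w \cdot y) < d(x, w \cdot y)$, and then the inductive hypothesis finishes it. A secondary point requiring care is that $\PP$ need not be a simplex (as the paper explicitly notes), so one cannot assume $K$ has exactly $n$ walls or invoke facts special to Coxeter simplices; but the arguments above only use that the walls are the $H_\alpha$ for $\alpha \in \Delta$ together with properness of the action, both of which hold in general here.
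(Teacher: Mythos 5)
A preliminary remark: the paper does not prove Lemma \ref{kihon} at all --- it is quoted, together with Proposition \ref{isom}, from \cite[Lemma 4.11]{m} --- so there is no in-paper argument to compare yours against; I can only judge your outline on its own. The Dirichlet half is the standard argument and is essentially sound: $H_\alpha\cap D$ is the bisector of $x$ and $s_\alpha\cdot x$ because $s_\alpha$ is a $d$-isometry fixing $H_\alpha\cap D$ pointwise and the equidistant set of two points in this uniquely geodesic projective model is a totally geodesic hyperplane; and the induction on $\ell(w)$ works, provided you make explicit the fact you are really using, namely the root dichotomy: $\ell(s_\alpha w)<\ell(w)$ forces $w^{-1}(\alpha)$ to be a negative root (Remark \ref{bb}), hence $B(y,w^{-1}(\alpha))>0$ for $y\in K$, which is what places $w\cdot y$ strictly on the far side of $H_\alpha$; one also needs the recorded fact that normalization does not change the sign of $B(\cdot,\alpha)$, so the sign computation for the linear action transfers to the normalized one. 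The wall case of the reverse inclusion, which you leave at ``one should be slightly careful'', is easy and should be said: a point of $w\cdot(\overline{K}\setminus K)$ is fixed by a conjugate reflection $ws_\alpha w^{-1}\ne\id$, so the strict inequality defining the Dirichlet region fails there.

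The genuine gap is in the covering condition $\overline{W\cdot K}=D$. You justify ``any point of $D'$ lies in some closed chamber $w\cdot\overline{K}$'' by local finiteness of the walls, but local finiteness only says that off the walls the space is a union of open chambers of the arrangement; it does not say that every such chamber is a $W$-translate of $K$, and that is exactly the content of the covering claim (it is the Tits-cone-type statement, not a consequence of proper discontinuity by itself), so as written this step does not go through. The standard repair uses only tools already available: given $y\in D$, properness of $(D,d)$ and proper discontinuity (Proposition \ref{isom}) make the orbit $W\cdot y$ closed and discrete, so $d(x,\cdot)$ attains its minimum on the orbit at some $z=w_0\cdot y$; if $B(z,\alpha)>0$ for some $\alpha\in\Delta$, then $x$ and $z$ are strictly separated by $H_\alpha$ and your bisector inequality gives $d(x,s_\alpha\cdot z)<d(x,z)$, a contradiction; hence $z\in\{v\in D\ \vert\ B(v,\alpha)\le0\ \text{for all}\ \alpha\in\Delta\}$, and this set equals $\overline{K}\cap D$ because the segment from any point of $K$ to such a $z$ stays in $K$. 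This yields $W\cdot\overline{K}=D$, hence $\overline{W\cdot K}=D$, removes the need to hedge about $D'$ versus $D$, and together with the strict Dirichlet inequality (which gives $w\cdot K\cap K=\emptyset$ exactly as you argue) completes the proof that $K$ is a fundamental region.
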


We say a rank $n$ Coxeter system $(W,S)$ is {\it affine} if its associating bi-linear form $B$
with respect to $S$ has the signature $(n-1,0)$. 
Fixing a generating set $S$ we simply say a Coxeter group $W$ is affine if 
$(W,S)$ is affine.
Affine Coxeter groups have infinite order and whose 
set of accumulation points of normalized roots is a singleton (\cite[Corollary 2.15]{hlr}).
We notice that if the rank $> 2$ then there is no simple roots $\alpha,\beta \in \Delta$
with $B(\alpha,\beta) \le -1$ for any affine Coxeter group. 
In fact if $B(\alpha,\beta) \le -1$ then the reflection subgroup generated by 
$s_\alpha, s_\beta$ has infinite order hence $E \cap \conv(\{\alpha,\beta\}) \neq \emptyset$.
The set $\conv(\{\alpha,\beta\})$ is an edge of $\conv(\widehat{\Delta})$.
On the other hand, it is well-known that any {\it Coxeter element} of an infinite Coxeter group,
that is an element given by multiplying all simple $B$-reflections, 
has infinite order.
Then an accumulation point given by multiplying a Coxeter element infinitely many times 
should lie on $\inte(\conv(\widehat{\Delta}))$ provided that $B$ is irreducible.
This is a contradiction.

\subsection{Three cases}\label{honbun}
We recall that $\conv(\widehat{\Delta})$ is a simplex and $\partial D \cup D$ is an ellipsoid.  
Following three distinct situations can happen due to the bilinear form $B$;
\begin{itemize}
	\item[(i)] the region $D \cup \partial D$ is included in $\inte(\conv(\widehat{\Delta}))$;
	\item[(ii)] there exist some faces of $\conv(\widehat{\Delta})$ 
	which are tangent to the boundary $\partial D$;
	\item[(iii)] 
	$D \cup \partial D \not\subset \inte(\conv(\widehat{\Delta}))$ and 
	no face of $\conv(\widehat{\Delta})$ is tangent to $\partial D$.
\end{itemize}
In this paper we concentrate in the case (ii).
For the other cases, the existence of the Cannon-Thurstom maps are 
argued in \cite{m}. 
We rephrase these cases as follows.

\begin{prop}\label{bunrui}
	For each case, we have the followings:
	
	\begin{tabular}{clp{217pt}}
	$(a)$ The case $(i)$   & $\iff$ & $\overline{K'} = \overline{K} \subset D$,\\
					   	   & $\iff$ & 
						   every Coxeter subgroup of $W$ of rank $n-1$ 
						   generated by a subset of $S$ is finite:\\
	$(b)$ The case $(ii)$  & $\iff$ & $\overline{K}$ or $\overline{K'}$ has some
										vertices in $\partial D$,\\
						   & $\iff$ & 
						   $W$ includes at least one affine special subgroup:\\
	$(c)$ The case $(iii)$ & $\iff$ & all the vertices of $\overline{K}$ are not always 
										in $\partial D$ and at least one of them is not in $D$,\\
						   & $\iff$ & 
						   every special subgroup of $W$ of rank $n'$ ($<n$)
						   is of type $(n'-1,1)$ or $(n',0)$.\\	
	\end{tabular}
\end{prop}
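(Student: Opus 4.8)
The plan is to reduce the whole proposition to a single dictionary recording, for each subset $T\subseteq S$, the ``type'' of the special subgroup $W_T:=\langle s\mid s\in T\rangle$ against the position of the corresponding face $F_T:=\conv(\{\widehat{\alpha_s}\mid s\in T\})$ of the simplex $\conv(\widehat{\Delta})$ relative to the ellipsoid $\overline{D}=D\cup\partial D$. Write $U_T=\mathrm{Span}\{\alpha_s\mid s\in T\}$ and $B_T=B|_{U_T}$, so that the form of $W_T$ is $B_T$ and $\widehat{U_T}=U_T\cap V_1$ is the affine hull of $F_T$. The standing inputs are: $B$ is nondegenerate of signature $(n-1,1)$, so $\dim U_T^{\perp_B}=n-|T|$ always and signatures split off $B$-orthogonally whenever $B_T$ is nondegenerate; Cauchy's eigenvalue interlacing theorem forces the principal submatrix $B_T$ to have at most one non-positive eigenvalue (a principal submatrix of a matrix with one negative and no zero eigenvalue has this property), so $W_T$ has exactly one of the types finite (signature $(|T|,0)$), affine (positive semidefinite with one-dimensional radical), or Lorentzian (signature $(|T|-1,1)$) --- these are the three types named in case $(c)$ when $|T|=n-1$; the null root of an irreducible affine Coxeter system is a strictly positive combination of its simple roots; and (as recalled in \S 2, following \cite{hlr}) the limit set of $W_T$ is a nonempty subset of $F_T$ whenever $W_T$ is infinite.

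The core step is a case analysis on the type of $B_T$. If $B_T$ is positive definite, then $\{v\in U_T\mid B(v,v)\le 0\}=\{{\bf 0}\}$, so $\widehat{U_T}$, hence $F_T$, misses $\overline{D}$; conversely $F_T\cap\overline{D}=\emptyset$ forces $W_T$ finite (otherwise its limit set would land in $F_T\cap\partial D$). If $B_T$ is affine, its radical is $\R\delta_T$ with $\delta_T$ supported positively on $T$ and $B(\delta_T,\delta_T)=0$, while $B_T$ is positive off the radical; hence $\{v\in U_T\mid B(v,v)\le 0\}=\R\delta_T$, so $\widehat{U_T}$ is tangent to $\partial D$ at the single point $\widehat{\delta_T}$, which lies in the relative interior of $F_T$. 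If $B_T$ is Lorentzian, its open negative cone is nonempty and (using Remark~\ref{zerodake}, so that it avoids $V_0$) normalizes to a nonempty open subset of $D\cap\widehat{U_T}$ meeting $F_T$; equivalently $\widehat{U_T}$ is secant to $\partial D$ and $F_T$ meets $D$. Summarising: $F_T$ misses $\overline{D}\iff W_T$ finite; $F_T$ tangent to $\partial D$ (at $\widehat{\delta_T}$)$\iff W_T$ affine; $F_T$ meets $D\iff W_T$ Lorentzian. Applied to $T=S\setminus\{s_i\}$, where $U_T^{\perp_B}$ is the line $\bigcap_{s\ne s_i}H_{\alpha_s}$ and $o\notin U_T$ guarantees that its normalization $v_i\in V_1$ is defined, this shows the vertex $v_i$ of $\overline{\PP}$ fixed by $W_{S\setminus\{s_i\}}$ lies in $D$, in $\partial D$, or outside $\overline D$ according as that rank-$(n-1)$ subgroup is finite, affine, or Lorentzian.

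Now the three equivalences. For $(a)$: since $\conv(\widehat{\Delta})$ is a simplex and $\overline D$ is a connected body meeting $\inte(\conv(\widehat{\Delta}))$ (as $K'\neq\emptyset$), the containment $D\cup\partial D\subset\inte(\conv(\widehat{\Delta}))$ is equivalent to ``no facet $F_{S\setminus\{s\}}$ meets $\overline D$'', i.e.\ by the dictionary to ``every rank-$(n-1)$ special subgroup is finite''; since every proper special subgroup lies in some $W_{S\setminus\{s\}}$ and subgroups of finite groups are finite, this is the same as ``every proper special subgroup of $W$ is finite''. In that case $\overline{\PP}$ meets $\overline D$ only inside $D$ and is clipped to the same set by $\inte(\conv(\widehat{\Delta}))$, so $\overline K=\overline{K'}$ is a compact polytope inside $D$; the reverse implications are immediate. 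For $(b)$: by the dictionary, some face of $\conv(\widehat{\Delta})$ is tangent to $\partial D$ iff some $W_T$ is affine, iff $W$ has an affine special subgroup; passing to an irreducible affine $W_T$, the point $\widehat{\delta_T}\in\partial D$ lies on the $|T|$ mirrors $\{H_{\alpha_s}\mid s\in T\}$ and, being supported on $T$, on the $n-|T|$ facet-hyperplanes $\{P_{\alpha_s}\mid s\notin T\}$, a total of $n\ge n-1$ flat walls, and the remaining inequalities $B(\widehat{\delta_T},\alpha_s)\le 0$ ($s\notin T$) hold as nonnegative combinations of nonpositive entries, so $\widehat{\delta_T}$ is a vertex of $\overline{K'}$ lying in $\partial D$ (and a vertex of $\overline K$ precisely when $|T|=n-1$, which is why the statement reads ``$\overline K$ or $\overline{K'}$''). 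Conversely, a vertex of $\overline K$ or $\overline{K'}$ lying in $\partial D$ is fixed by the special subgroup generated by the mirrors through it, a subgroup fixing a point of $\partial\h^{n-1}$ (via Lemma~\ref{hyp}); a finite such subgroup would fix an interior point, forcing the boundary incidence to be transverse to the chamber and contradicting the wall-count at a vertex, and a Lorentzian one fixes no single boundary point, so the subgroup is affine. For $(c)$: case $(iii)$ is by definition the simultaneous failure of $(i)$ and $(ii)$, which by the above says $W$ has no affine special subgroup but not all its proper special subgroups are finite; by the interlacing trichotomy this is exactly ``every proper special subgroup is of type $(n'-1,1)$ or $(n',0)$, with at least one infinite'', and the vertex reformulation is the corresponding statement read off the dictionary (no vertex of $\overline K$ sits on $\partial D$, yet some facet still cuts $D$, so not every vertex lies in $D$).

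The step I expect to be the main obstacle is making precise --- and matching to the subgroup data --- the notion ``vertex of $\overline K$ (resp.\ $\overline{K'}$)'': one must fix it to mean an intersection point of $n-1$ of the flat walls $\{H_{\alpha_s}\}$ (resp.\ $\{H_{\alpha_s}\}\cup\{P_{\alpha_s}\}$) lying in $\overline D$, verify that these are exactly the fixed points of the maximal finite-or-affine special subgroups, and check all the slack inequalities; the sign bookkeeping (choosing the correct representative in $V_1$ of each line $U_T^{\perp_B}$, which reduces to $o\notin U_T$, and the signs of $B(v,\alpha_s)$ at candidate vertices) is routine but must be handled carefully. The second delicate point is the converse direction of $(b)$: excluding an ``accidental'' tangency of a chamber-vertex with $\partial D$ not coming from an affine subgroup, for which one leans on the hyperbolic model, namely that a discrete reflection group of $\h^{n-1}$ fixing a boundary point and having a fundamental chamber with a vertex there is generated by parabolic (affine) reflection data. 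A harmless wrinkle in $(c)$ is that ``type $(n'-1,1)$'' should be read so as to allow a degenerate radical; this does not affect the classification.
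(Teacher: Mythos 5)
The paper itself states Proposition~\ref{bunrui} without any proof (it is presented as a rephrasing, with the underlying facts scattered in \cite{m}, \cite{hlr} and the later discussion of $PF$), so there is no argument of the author's to compare yours against. Judged on its own, your central ``dictionary'' is correct and is surely the intended content: by Cauchy interlacing a principal submatrix $B_T$ is positive definite, positive semidefinite with one-dimensional radical, or of type $(|T|-1,1)$, and correspondingly $F_T$ misses $\overline{D}$, touches $\partial D$ in the single point $\widehat{\delta_T}$, or cuts $D$ (the last two via Perron--Frobenius applied to the relevant irreducible component, i.e.\ Lemma~\ref{heimen} for $B_T$ --- worth saying explicitly, since for reducible $T$ the vector $\delta_T$ is strictly positive only on the affine component, so ``relative interior of $F_T$'' is not quite right; you do pass to an irreducible $T$ where it matters). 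The connectedness argument for case $(i)$ and the verification that $\widehat{\delta_T}$ satisfies all defining inequalities of $\overline{\PP'}$ are fine.

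There are, however, two places where you assert rather than prove, and both need real arguments. First, ``the reverse implications are immediate'' in $(a)$ is not: from $\overline{K'}=\overline{K}\subset D$ you must exclude case $(iii)$, and nothing in your text rules out a Lorentzian facet subgroup coexisting with a chamber whose closure stays inside $\conv(\widehat{\Delta})$ and off $\partial D$. One workable route: if $\overline{K}\subset D$ then $\overline{K}$ is compact and, by proper discontinuity together with Lemma~\ref{kihon}, $D=W\cdot\overline{K}$, so the action is cocompact and $\Lambda(W)=\partial D$; since $\Lambda(W)\subset\conv(\widehat{\Delta})$ (as used throughout the paper, following \cite{hlr}), this forces $\overline{D}\subset\conv(\widehat{\Delta})$ and, absent tangencies, case $(i)$. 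Alternatively one can show directly that a Lorentzian facet produces either points of $K$ outside $\conv(\widehat{\Delta})$ or ideal points of $\overline{K}$ on $\partial D$, but that too requires a careful perturbation and is not ``immediate''. Second, your converse of $(b)$ leans on the unproved assertion that a chamber vertex at infinity of a discrete reflection group of $\h^{n-1}$ must carry affine reflection data; as phrased (``transverse to the chamber'', ``wall-count'') this is not a proof, and it is unnecessary: if $x\in\overline{\PP'}\cap\partial D$, then $0=q(x)=\sum_{s}x_sB(x,\alpha_s)$ with every term $\le 0$, so $B(x,\alpha_s)=0$ whenever $x_s\neq 0$; hence $x$ spans the radical of $B_T$ for $T=\{s\mid x_s\neq0\}\subsetneq S$, and $W_T$ is affine. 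For vertices of $\overline{\PP}$ your own observation on the signature of $U_T^{\perp_B}$ already does the job. (This same one-line computation also closes the small gap in your forward direction of $(a)$, where ``$\overline{\PP}$ meets $\overline{D}$ only inside $D$'' is asserted without proof.) Finally, your proposed reading of ``vertex of $\overline{K}$'' as an intersection point of walls \emph{lying in $\overline{D}$} is incompatible with clause $(c)$ (``at least one of them is not in $D$''); the reading that makes the statement coherent is ``vertex of the polyhedron $\overline{\PP}$ (resp.\ $\overline{\PP'}$)'', and with that fixed your partition argument $(iii)=\neg(i)\wedge\neg(ii)$ goes through.
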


From Proposition \ref{bunrui} we deduce that the fundamental region $K$ (resp.$K'$) 
is bounded if the case (i) (resp. the case (ii)) occurs.
If $\overline{K'}$ is not compact, 
then $\partial D$ must be tangent to some faces of $\conv(\widehat{\Delta})$.
In this case $K'$ has some cusps at points of tangency of $\partial D$.
This happens if and only if (ii).
Because of this we call each cases as follows:
The normalized action of $W$ on $D$ is 
\begin{itemize}
	\item 
	{\it cocompact} if the case (i) happens;
	\item
	{\it with cusps} if the case (ii) happens;
	\item
	{\it convex cocompact} if the case (iii) happens.
\end{itemize}

In the case (ii) the {\it rank} of cusp $v$ is the minimal rank of 
the affine Coxeter subgroup generated by a subset of $S$ 
which fixes $v$.

For a metric space, its isometries are classified into three types by the translation length.
The {\em translation length} of an isometry $\gamma$ of a metric space $(X,d)$
is the value $trans(\gamma) := \inf\{d(x,\gamma(x))\ \vert\ x \in X\}$.

\begin{dfn}
	For a metric space $X$ an isometry $\gamma$ of $X$ is called 
	\begin{itemize}
	\item[(1)]{\em elliptic} if $trans(\gamma) = 0$ and attains in $X$,
	\item[(2)]{\em hyperbolic} if $trans(\gamma)$ attains a strictly positive minimum,
	\item[(3)]{\em parabolic} if $trans(\gamma)$ does not attain its minimum.
	\end{itemize}
\end{dfn}
%

\begin{rem}\label{fixbunrui}
	Let $X$ be a CAT(0) space and $\gamma$ be an isometry on $X$.
	It is clear that $\gamma$ is elliptic if and only if there exists at least one 
	fixed point of $\gamma$ in $X$.
	In the case where $X$ is proper and the group $\langle \gamma \rangle$ acts on $X$ discretely,
	if $\gamma$ is elliptic then $\langle \gamma \rangle$ is finite.
	For the hyperbolic isometry, we have a fixed geodesic line in $X$ 
	(\cite[Theorem 6.8(1)]{BH}).
	This means that if $\gamma$ is hyperbolic then 
	there are at least two fixed points by $\gamma$ on $\partial_I X$
	where $\partial_I X$ is the CAT(0) boundary of $X$.
	Accordingly if $\gamma$ is of infinite order and
	has only one fixed point in $\partial_I X$ then it is parabolic.
\end{rem}

As noted before we focus on the case (ii) in this paper.
Hence we assume that 
$W$ includes at least one affine Coxeter subgroup $W'$ with a generating set $S' \subset S$.

\begin{lem}\label{fixed}
	If $w \in W$ has an eigenvector $\xi$ on $\widehat{Q}$ corresponding to the eigenvalue $1$
	as a product of $B$-reflections.
	Then for any eigenvector of $w$ corresponding to the eigenvalue $1$ 
	on $\widehat{Q}$ lies on the set $\R \xi$. 
\end{lem}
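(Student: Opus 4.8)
The plan is to analyze the eigenspace $\mathrm{Eig}_1(w) := \{v \in V \mid w(v) = v\}$ and show that it meets $\widehat{Q}$ in at most one point (up to scaling), using the fact that $w$ preserves the form $B$ of signature $(n-1,1)$. First I would observe that since $w$ is a product of $B$-reflections, it lies in the orthogonal group $O(B)$, so $B$ restricts to a symmetric bilinear form on the invariant subspace $\mathrm{Eig}_1(w)$. The hypothesis says this subspace contains the vector $\xi$ with $q(\xi) = B(\xi,\xi) = 0$ and $\xi \neq \mathbf{0}$; the goal is to conclude that every vector $v \in \mathrm{Eig}_1(w)$ with $q(v) = 0$ satisfies $v \in \R\xi$.

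The key step is a signature/linear-algebra dichotomy on $U := \mathrm{Eig}_1(w)$. Because $B$ has signature $(n-1,1)$ globally, the restriction $B|_U$ has at most one negative direction; equivalently, the ``light cone'' $\{v \in U \mid q(v) = 0\}$ is a genuine quadric cone only if $B|_U$ is degenerate or of Lorentzian type, and I would split into cases according to $\dim U$ and the rank of $B|_U$. If $B|_U$ were positive definite then $q(v)=0$ forces $v=\mathbf 0$ on $U$, contradicting the existence of $\xi$; so $B|_U$ has a nonpositive direction. If $B|_U$ is nondegenerate, it is of type $(\dim U - 1, 1)$, and the zero set of a Lorentzian quadratic form is a cone whose only $1$-dimensional ``degenerate'' directions — the lines $\R v$ with $v$ null and $v^{\perp_B} \cap U$ containing $v$ — need not be unique, so I would instead argue directly: suppose $v \in U$ is null and $v \notin \R\xi$. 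Then $\mathrm{Span}(\xi, v)$ is a $2$-plane in $U$ on which $B$ vanishes at two independent points; if $B(\xi, v) \neq 0$ this plane is a hyperbolic plane (signature $(1,1)$), while if $B(\xi,v) = 0$ the plane is totally isotropic, forcing signature $(p,q)$ with $p,q \geq 1$ on any completion — in either case one extracts from $U$ (hence from $V$) a $B$-isotropic subspace or a $(1,1)$-subspace that is incompatible with signature $(n-1,1)$ once one remembers the extra constraint coming from $\widehat Q$. The cleanest route is: a null $2$-plane would give $B$ signature at least $(2,2)$ somewhere, impossible; and a $(1,1)$-plane inside $U$ together with the global positive directions orthogonal to it is fine dimensionally, so this naive count is \emph{not} enough — which is exactly why the hypothesis ``$\xi$ lies on $\widehat Q$'' (equivalently $\xi \in \partial D$, the limit set region) rather than merely ``$q(\xi)=0$'' is being used.

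I expect the main obstacle to be precisely ruling out the Lorentzian case where $\dim U \geq 3$ and $B|_U$ has signature $(\dim U - 1, 1)$, since then $\{q = 0\} \cap U$ is a full light cone with many rays. To handle it I would bring in the dynamics: $w$ restricted to $U$ is the identity, so every null ray in $U$ is fixed by $w$; but $w$ acting on $D \cup \partial D \cong \h^{n-1} \cup \partial \h^{n-1}$ is an isometry, and by Lemma~\ref{hyp} together with Remark~\ref{fixbunrui} a non-trivial isometry of $\h^{n-1}$ of the relevant (parabolic or hyperbolic) type has at most two fixed points on $\partial_I D \simeq \partial D$, and a parabolic exactly one; since in case (ii) the element $w$ of interest comes from an affine special subgroup and is parabolic, it fixes a unique point of $\partial D$, forcing the null rays in $U$ to coincide, i.e. $\{v \in U \mid q(v) = 0\} = \R\xi$. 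Thus the proof reduces to: (1) $B$ is $w$-invariant on $U = \mathrm{Eig}_1(w)$; (2) $q|_U$ cannot be positive definite since $\xi$ exists; (3) translate ``$v \in U$, $q(v) = 0$'' into ``$\widehat v$ is a fixed point of $w$ on $\partial D$''; (4) invoke the classification of isometries of $\h^{n-1}$ to see that the fixed-point set on $\partial D$ is a single point, hence all such $v$ are proportional to $\xi$.
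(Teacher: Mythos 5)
Your steps (1)--(3) are sound and run parallel to the linear-algebra setup one would want, but step (4) has a genuine gap: you dispose of the remaining Lorentzian case by asserting that $w$ is parabolic ``since in case (ii) the element of interest comes from an affine special subgroup''. Parabolicity is not available at this point --- it is exactly the content of Proposition~\ref{para}, and the paper proves that proposition \emph{using} Lemma~\ref{fixed}, so invoking it here is circular. Moreover, if one already knew $w$ were parabolic, the conclusion would be immediate (a parabolic isometry of $\h^{n-1}$ has a unique fixed point at infinity) and the entire signature analysis would be superfluous; the whole point of the lemma is to obtain uniqueness of the isotropic fixed direction without knowing the isometry type in advance. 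Note also that the lemma is stated for an arbitrary $w\in W$ admitting an isotropic eigenvalue-$1$ eigenvector, not only for elements of affine special subgroups, so even granting your appeal to parabolicity you would have established a narrower statement than the one claimed.

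The paper closes the case you could not, by a different observation: assuming a second normalized null eigenvector $\widehat{\eta}\notin\R\widehat{\xi}$ (and quoting Lemma A.3 of \cite{hmn} to control the isotropic eigendirections and their eigenvalues), it notes that because $\xi$ and $\eta$ are both eigenvalue-$1$ eigenvectors of the \emph{linear} action, every point $t\widehat{\xi}+(1-t)\widehat{\eta}$ with $0<t<1$ is fixed by the normalized action and lies in the open region $D$; on the other hand, by Remark~\ref{fixbunrui} an element fixing two points of $\partial D$ is forced into the hyperbolic alternative, which has no fixed point inside $D$ --- a contradiction. In other words, the decisive input is that the whole open segment between the two boundary fixed points is fixed pointwise, producing interior fixed points, which is then played off against the classification of isometries (and, via proper discontinuity as in Proposition~\ref{isom}, against the order of $w$). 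Your hyperbolic-plane case in step (3) could be completed in the same spirit --- a pointwise-fixed $(1,1)$-plane in the eigenspace yields interior fixed points of the normalized action, hence an elliptic, finite-order element --- but simply declaring $w$ parabolic does not do that work.
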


\begin{proof}
	We assume that $w$ has another eigenvector $\eta$ included in 
	$\widehat{Q}$ which is not in $\R \xi$.
	Then by \cite[Lemma A.3]{hmn} (since this lemma holds for any Coxeter groups) 
	the number of eigenvectors with norm $1$ precisely equals to two 
	and corresponding eigenvalues 
	$\lambda$ and $\lambda'$ satisfy an equation $\lambda\lambda' =1$.
	By the assumption of our claim, we have $\lambda = 1 = \lambda'$.

	Consider the normalized action of $W$ on $D$.
	Notice that we have $w\cdot \widehat{\xi} = \widehat{\xi}$ and 
	$w \cdot \widehat{\eta} = \widehat{\eta}$.
	The region $D$ is a proper CAT(0) and Gromov hyperbolic space.
	Then by Remark \ref{fixbunrui}, $w$ must be a hyperbolic element
	and hence it does not have a fixed point in $D$.
	However a point $t \widehat{\xi} + (1-t) \widehat{\eta}$ for $t \in [0,1]$ 
	is fixed by $w$ and lies in $D$ which is a contradiction.
\end{proof}

\begin{prop}\label{para}
	Any infinite order element $w$ of an arbitrary affine Coxeter subgroup $W'$ of $W$
	with a generating set $S' \subset S$ is a parabolic isometry of $(D,d)$.
\end{prop}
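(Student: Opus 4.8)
The plan is to show that $w$ acts on $(D,d) \cong \h^{n-1}$ with a unique fixed point on the boundary $\partial D = \widehat Q$, and then invoke Remark \ref{fixbunrui} to conclude that $w$ is parabolic. First I would recall that $W'$, being affine of some rank $|S'| = k$, has associated bilinear form $B'$ (the restriction of $B$ to the span $V'$ of $\Delta' = \{\alpha_s \mid s \in S'\}$) of signature $(k-1,0)$, i.e.\ positive semidefinite with a one-dimensional radical. Let $\delta$ span the radical of $B'$; by the Perron--Frobenius argument already used in Lemma \ref{korekore} (applied to the irreducible nonnegative matrix $-B' + I$ on $V'$), $\delta$ can be taken to have all coordinates positive in the basis $\Delta'$, so $\delta \in V \setminus V_0$ and $\widehat\delta \in V_1$ is well defined. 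Since $B(\delta,\delta) = B'(\delta,\delta) = 0$ we have $\widehat\delta \in \widehat Q = \partial D$. Every element of $W'$ fixes the radical of $B'$ pointwise, so $w(\delta) = \delta$, hence $w \cdot \widehat\delta = \widehat\delta$: thus $w$ fixes the boundary point $\widehat\delta$.

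Next I would argue that $\widehat\delta$ is the \emph{only} fixed point of $w$ on $D \cup \partial D$. By Lemma \ref{fixed} (applied with $\xi = \delta$, noting $\delta$ is an eigenvector of $w$ with eigenvalue $1$ lying on $\widehat Q$ after normalization), any eigenvector of $w$ with eigenvalue $1$ lying on $\widehat Q$ is a scalar multiple of $\delta$; in particular $\widehat\delta$ is the unique point of $\partial D$ fixed by the projective (normalized) action of $w$ that comes from a genuine eigenline. To rule out a fixed point in the interior $D$: a fixed point $x \in D$ would be a vector $v$ with $B(v,v) < 0$ and $w(v) = |w(v)|_1 \, v$; but $w$ preserves $B$, so $B(w(v),w(v)) = B(v,v)$ forces $|w(v)|_1 = \pm 1$, and since $v,w(v)$ lie in the positive cone, $|w(v)|_1 = 1$, i.e.\ $v$ is a genuine eigenvector of $w$ with eigenvalue $1$ and $B(v,v) < 0$. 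An eigenvector with negative $B$-norm cannot exist for an element of $W'$, since $w$ preserves the $B'$-positive-semidefinite subspace $V'$ and acts on a $B$-positive-definite complement, so all real eigenvectors of $w$ have $B$-norm $\ge 0$; this contradiction shows $w$ has no fixed point in $D$. Finally, since $w$ has infinite order and the normalized action of $\langle w\rangle$ is properly discontinuous (Proposition \ref{isom}), $w$ is not elliptic (Remark \ref{fixbunrui}); and since it has exactly one fixed point on $\partial_I D \simeq \partial D$, it is not hyperbolic either (a hyperbolic isometry has two boundary fixed points). Therefore $w$ is parabolic.

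The main obstacle I anticipate is the interior step: cleanly ruling out an interior fixed point, i.e.\ showing that no infinite-order $w \in W'$ has a $B$-negative eigenvector. The cleanest route is the eigenvalue/norm bookkeeping sketched above — decompose $V = V' \oplus (V')^{\perp_B}$, which is $w$-invariant because $w$ fixes $V'$ setwise and $B$ is nondegenerate, observe $B|_{V'}$ is positive semidefinite and $B|_{(V')^\perp}$ is positive definite (as the only negative direction of $B$ on all of $V$ would have to live somewhere, but one checks via signatures $(k-1,0)$ on $V'$ and the global signature $(n-1,1)$ that $(V')^{\perp_B}$ has signature $(n-k-1,1)$ — wait, this needs care). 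Actually a safer formulation: it suffices to note that $w \in W'$ and $W'$, acting on the $(k-1)$-dimensional Euclidean space $V'/\mathrm{rad}(B')$, is an affine isometry group, so every eigenvalue of $w$ on $V'$ has modulus $1$ and the corresponding real eigenvectors have $B$-norm $\ge 0$; combined with Lemma \ref{fixed} this pins down all eigenvalue-$1$ eigenvectors on $\widehat Q$ to $\R\delta$ and shows there is no $B$-negative eigenvector, which is exactly what is needed. If one prefers to avoid the decomposition entirely, one can instead cite Lemma \ref{fixed} together with the classification in Remark \ref{fixbunrui} directly: $w$ fixes $\widehat\delta \in \partial D$, and if $w$ were hyperbolic it would fix a second boundary point coming from an eigenline on $\widehat Q$, contradicting Lemma \ref{fixed}; if $w$ were elliptic then $\langle w \rangle$ would be finite, contradicting infinite order; hence $w$ is parabolic.
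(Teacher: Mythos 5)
Your proof is correct and its skeleton is the same as the paper's: produce a boundary fixed point of $w$ that is an eigenvector with eigenvalue $1$ on $\widehat{Q}$, invoke Lemma \ref{fixed} to see it is the only such point of $\partial D$, exclude the elliptic case by infinite order plus proper discontinuity (Proposition \ref{isom}, Remark \ref{fixbunrui}), and conclude parabolicity. The only genuine difference is how the fixed point is obtained: the paper cites \cite{hlr} and \cite{m} to say the limit set of the affine subgroup $W'$ is the single point $\widehat{Q'}$ and then observes this point is $w$-fixed, whereas you construct it directly as the normalized radical vector $\widehat{\delta}$ of $B'$, fixed pointwise by all of $W'$ since $B'(\alpha,\delta)=0$ for $\alpha\in\Delta'$; your route is more self-contained (it does not need the limit-set results), at the cost of the Perron--Frobenius remark, which strictly speaking needs $B'$ irreducible --- in the reducible affine case the radical vector has positive coordinates only on the affine component, but $|\delta|_1>0$ still holds, so nothing breaks. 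One caution: your middle passage ruling out an interior fixed point by eigenvalue bookkeeping is both unnecessary and shaky as written --- $V'\oplus (V')^{\perp_B}$ is not a direct sum because $\mathrm{rad}(B')\subset V'\cap (V')^{\perp_B}$, and your ``safer formulation'' only controls eigenvectors lying in $V'$, not eigenvectors of $w$ in all of $V$ --- but it can simply be deleted, since your closing argument (infinite order excludes elliptic, Lemma \ref{fixed} excludes hyperbolic) is exactly the paper's and suffices.
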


\begin{proof}
	Let $\Delta'$ be a subset of $\Delta$ corresponding to $S'$ and 
	let $B'$ be a submatrix corresponding to $\Delta'$.
	We remark that the $B$-reflection on $\conv(\widehat{\Delta}')$ 
	coincides with the $B'$-reflection.
	Because $D$ is proper metric space, by Remark \ref{fixbunrui},
	$w$ is either hyperbolic or parabolic.
	
	Now by \cite[Corollary 2.15]{hlr} and \cite[Theorem 1.2]{m} the limit set of $W'$
	is a point $\xi$.
	Moreover it equals to $\widehat{Q'} = 
	\{v \in V_1\ \vert\ B'(v,v)=0 \}$. 
	Then $\xi$ should be a fixed point in $\partial D$ by $w$ since $B'$-reflections preserve
	the bilinear form $B'$.
	Since $W$ action by $B$-reflections are linear 
	we see that $\xi$ is an eigenvector of $w$.
	On the other hand, by the definition of $\widehat{Q'}$, 
	$\xi$ is also an eigenvector of $B'$ corresponding to the eigenvalue $0$.
	This shows that $\xi$ corresponds to the eigenvalue $1$ of $w$.
	Lemma \ref{fixed} says that such an element in $W$ fixes only one point in $\partial D$.
	By Remark \ref{fixbunrui}, we see that $w$ is parabolic.
\end{proof}

\section{Horoballs and higher rank cusps}

It is known that a tangent point $p \in \conv(\widehat{\Delta}') \cap \partial D$ in the Case (ii)
for some $\Delta' \subset \Delta$ can be expressed as the intersection of 
$\{H_\alpha\ \vert\ \alpha \in \Delta'\}$.
It is easy to see that the converse is also true namely 
for $\Delta' \subset \Delta$ if $\{H_\alpha\ \vert\ \alpha \in \Delta'\}$ intersect each other
at $x \in \partial D$ then $x$ is a point on $\conv(\widehat{\Delta}')$.
In fact if there exists an intersection point $p$ of $\{H_\alpha\ \vert\ \alpha \in \Delta'\}$ 
on $\partial D$ then 
the bilinear form $B'$ given by restricting $B$ to $\Delta'$ has the eigenvalue $0$ and 
$p$ is its eigenvector.
Now let $w = \prod_{\alpha \in \Delta'} s_\alpha$.
Then we have $w(p) = p$ hence $w$ has an eigenvalue $1$.
By Proposition \ref{para}, $p$ is the unique fixed point of the normalized action of $w$.
On the other hand $W'$ generated by $\{s_\alpha\ \vert\ \alpha \in \Delta'\} \subset S$
is affine since $B'$ has the eigenvalue $0$.
Accordingly there must be the unique fixed point $p'$ of 
the normalized action of $w$ on $\conv(\widehat{\Delta}')$.
Together with the argument above we have $p = p'$. 
We define a set $PF$ of such points:
\[
	PF := \left\{p \in \partial D\ \left\vert\ \exists \Delta' \subset \Delta\ 
	\text{s.t.}\
	\{p\} = 
	\left( \cap_{\alpha\in\Delta'}H_\alpha\right)  \cap 
	\left( \cap_{\delta \in \Delta \setminus \Delta'} P_\delta\right) \right.\right\}.
\]
Then we notice that $PF$ is the set of vertices of $K'$ which are on $\partial D$
by Proposition \ref{bunrui} (b).

\begin{rem}\label{bb}{\rm 
	By \cite[Proposition 4.2.5]{bb},
	for $w \in W$ and $s_\alpha \in S$ if $\ell(sw) > \ell(w)$ then 
	all the coordinates of $w^{-1}(\alpha)$ are non-negative.}
\end{rem}

\begin{rem}\label{koteigun}
	For $p \in PF$ so that $\{p\} = 
	\left(\cap_{\alpha\in\Delta'}H_\alpha \right)
	\cap 
	\left(\cap_{\delta \in \Delta \setminus \Delta'} P_\delta \right)$,
	the affine subgroup $W'$ generated by $\{s_\alpha\ \vert\ \alpha \in \Delta'\}$ fixes $p$.
	Conversely if a Coxeter subgroup fixes $p$ then it is a subgroup of $W'$.
	In fact for $w \in W\setminus W'$ there exists at least one $\beta \in \Delta \setminus \Delta'$
	such that the $\beta$-th coordinate of $w \cdot p$ is not $0$.
	This can be seen by the induction for the word length.
	Recall that $H_\alpha$ cannot intersect with 
	$\conv(\widehat{\Delta} \setminus \{\widehat{\alpha}\})$
	for any $\alpha \in \Delta$.
	For $\beta \in \Delta\setminus \Delta'$, 
	then the $\beta$-th coordinate of $s_\beta(p)$ is  $-2B(p,\beta) > 0$
	since $p \in \overline{K}$ and 
	$p \not\in H_\beta$ for any $\beta \in \Delta\setminus \Delta'$ hence $B(p,\beta) < 0$.
	We assume that the claim holds for all $w' \in W$ with the length below $k$.
	For $w\in W$ with $\ell(w) = k+1$ and $w = s_\beta w'$ ($\ell(w') = k$),
	we have $w(p) = w'(p) - 2B(w'(p),\beta) \beta$.
	Then by the inductive assumption there exists at least one 
	$\gamma \in \Delta \setminus \Delta'$ such that the $\gamma$-th coordinate of $w'(p)$ is 
	$\neq 0$.
	If $\gamma \not= \beta$ then we obtain the claim.
	For the other case, i.e., $\gamma=\beta$ and the $\beta$-th coordinate of $w'(p)$ is not zero,
	since all the coordinates of ${w'}^{-1}(\beta)$ are non-zero (conf, Remark \ref{bb}) and 
	$p \in \overline{K}$ 
	we have $B(w'(p), \beta) = B(p,{w'}^{-1}(\beta)) \le 0$.
	This shows our claim.
\end{rem}

We notice that the argument above only needs the assumption 
$p \not\in H_\beta$ for any $\beta \in \Delta\setminus \Delta'$.
Since $H_\beta$ cannot intersect with $\conv(\widehat{\Delta} \setminus \{\widehat{\beta}\})$,
we have the same result for any Coxeter subsystem $(W',S')$ such that $S' \subset S$,
namely, 
for any $w \in W\setminus W'$ and any $x \in \Lambda(W) \cap \conv(\widehat{\Delta}')$
there exists at least one $\beta \in \Delta \setminus \Delta'$
such that the $\beta$-th coordinate of $w \cdot x$ is not $0$.

\begin{dfn}
	Let $(X,d)$ be a CAT(0) space.
	Fix a point $o \in X$ and take $k \in \R$.
	For $\xi \in \partial X$, we take a geodesic $c$ from $x$ to $\xi$.
	A {\em horoball} at $\xi$ with $k$ (based at $o$) is a set
	\[
		 O_{\xi,k} = 
		\left\{x \in X\ \left\vert \lim_{t \ar \infty}d(c(t),x)-t <k \right.\right\}.
	\]
	The boundary of a horoball $\partial O_{\xi,k}$ is called a {\em horosphere}, that is,
	\[
		\partial O_{\xi,k} = 
		\left\{x \in X\ \left\vert \lim_{t \ar \infty}d(c(t),x)-t = k \right.\right\}.
	\]
\end{dfn}

The function $b_c(x) := \lim_{t \ar \infty}d(c(t),x)-t$ defining the horoball is 
said to be a {\it Busemann function} associated with $c$.
It is known that Busemann functions are well defined, convex and $1$-Lipschitz.
We remark that $O_{\xi,k} \subset O_{\xi,k'}$ for $k<k'$
and $O_{p,k}$ tends to $p$ for $k\ar -\infty$.
In this paper, we always take the normalized eigenvector for 
the negative eigenvalue of $B$ as the base point $o$.

\begin{lem}\label{bunri}
	There exists $k \in \R$ such that for any $p, p' \in PF$ and $w \in W$,
	if $O_{p,k} \neq w \cdot O_{p',k}$ then
	\[
		O_{p,k} \cap w \cdot O_{p',k} = \emptyset.
	\]
\end{lem}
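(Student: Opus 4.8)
The plan is to produce a single uniform constant $k$ that works for all pairs in $PF$ simultaneously, and to reduce the disjointness claim to a finiteness/discreteness argument. First I would observe that $PF$ is a finite set: its elements are indexed by subsets $\Delta' \subset \Delta$ via the defining intersection $\{p\} = \left(\cap_{\alpha\in\Delta'}H_\alpha\right)\cap\left(\cap_{\delta\in\Delta\setminus\Delta'}P_\delta\right)$, and there are only finitely many such subsets. Each $p \in PF$ is a parabolic fixed point whose full stabilizer in $W$ is the affine special subgroup $W'_p = \langle s_\alpha : \alpha \in \Delta'\rangle$ (Remark \ref{koteigun}). So the $W$-orbit of a horoball $O_{p,k}$ is parametrized by the coset space $W/W'_p$.

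Next I would fix, for each $p \in PF$, a horoball $O_{p,k}$ and examine when two translates $w_1\cdot O_{p,k}$ and $w_2\cdot O_{p',k}$ meet. By $W$-equivariance of the normalized action and of Busemann functions (the action is isometric by Proposition \ref{isom}, and $O_{p,k}$ is defined purely metrically), it suffices to show: there is a $k$ so that for every $p,p'\in PF$ and every $w\in W$ with $w\cdot p' \neq p$, the horoballs $O_{p,k}$ and $O_{w\cdot p',k} = w\cdot O_{p',k}$ are disjoint. The key input is that the set of parabolic fixed points $\{w\cdot p' : w\in W,\ p'\in PF\}$ is a discrete subset of $\partial D$ with respect to the geometry of $D\cong\h^{n-1}$ (Lemma \ref{hyp}): indeed these are precisely the cusp points of the properly discontinuous action, and two distinct cusps cannot be connected by arbitrarily short horospherical "bridges". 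Concretely, for two distinct points $\eta,\eta'\in\partial D$, the horoballs $O_{\eta,k}$ and $O_{\eta',k}$ are disjoint once $k$ is small enough depending on the visual/Euclidean distance between $\eta$ and $\eta'$; in $\h^{n-1}$ one has an explicit relation. So I must bound below the "gap" between any two distinct points in the orbit set.

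The main obstacle — and the place where the affine-subgroup hypothesis really enters — is establishing that this gap is bounded below by a positive constant, i.e. that distinct cusp points do not accumulate against each other at scale larger than a fixed horoball depth. I would argue this using proper discontinuity (Proposition \ref{isom}) together with the fact that each cusp has finite "rank" and the fundamental region $K'$ has only finitely many cusps (Proposition \ref{bunrui}(b)): choose $k$ small enough that each $O_{p,k}$ ($p\in PF$) is contained in the cusp neighbourhood of $K'$ cut out at that cusp, so that $O_{p,k}\cap O_{p',k}=\emptyset$ for distinct $p,p'\in PF$ and moreover $O_{p,k}$ meets no $W$-translate of $K'$ other than those whose closure contains $p$. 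Then if $w\cdot O_{p',k}$ met $O_{p,k}$ with $w\cdot p'\neq p$, a point in the intersection would lie in a translate of $K'$ abutting $p$ and simultaneously in $w\cdot(\text{a cusp neighbourhood of }p')$, forcing $w\cdot p' = p$ by Remark \ref{koteigun} and the classification of stabilizers — a contradiction. Packaging this: take the minimum of the finitely many thresholds over $p,p'\in PF$, obtaining the single constant $k$ claimed. The routine part I would omit is the explicit $\h^{n-1}$ computation relating horoball depth to boundary distance; the conceptual core is the finiteness of $PF$ plus proper discontinuity.
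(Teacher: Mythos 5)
Your overall shape (precisely invariant horoballs cut out by the fundamental region) is the right one, but there are two genuine gaps. First, the middle of your argument rests on a misconception: $w\cdot O_{p',k}$ is \emph{not} the horoball $O_{w\cdot p',k}$ based at $o$; it is a horoball at $w\cdot p'$ whose depth relative to $o$ is shifted by the Busemann cocycle, so it is in general much ``shallower'' seen from $o$. Consequently your plan to ``bound below the gap between any two distinct points in the orbit set'' cannot work: the orbit $W\cdot PF$ of parabolic points accumulates in $\partial D$ (already for a rank~$2$ cusp the picture is that of the modular group, whose cusp orbit is dense in the boundary circle), so no positive lower bound on the Euclidean/visual distance between distinct cusp points exists. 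Disjointness holds for a different reason --- translated horoballs shrink as their basepoints approach another cusp --- and your hedge ``at scale larger than a fixed horoball depth'' does not supply that reason.

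Second, the step that actually carries the lemma is asserted rather than proved: ``choose $k$ small enough that $O_{p,k}$ meets no $W$-translate of $K'$ other than those whose closure contains $p$.'' Finiteness of $PF$ plus proper discontinuity (Proposition \ref{isom}) do not give this, because infinitely many translates $wK'$ with $p\notin \overline{w\cdot K'}$ accumulate at $p$, and one needs a \emph{uniform} estimate over all of them; no compactness argument in $D$ reaches the boundary point $p$. This is exactly where the paper uses the Dirichlet-region property (Lemma \ref{kihon}): for $x\in K\setminus O_{p,k}$ and $c$ the geodesic from $o$ to $p$ one has $k\le d(c(t),x)-t\le d(c(t),w\cdot x)-t$ for all $w$, so the entire orbit of such $x$ avoids $O_{p,k}$; combined with a ball $B(o,r)$ whose radius exceeds the distance from $o$ to every face of $\overline{K'}$, so that the components of $\overline{K'}\setminus B(o,r)$ each contain a single vertex, and with convexity of horoballs, this pins any putative intersection point into $K'\cap w'w\cdot\overline{K'}$, contradicting discreteness. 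Your proposal never invokes the Dirichlet/Busemann comparison, and even granting your unproved claim, your final step ``forcing $w\cdot p'=p$ by Remark \ref{koteigun}'' is incomplete: a single translate of $\overline{K'}$ can abut several cusp vertices, so you must still separate horoballs sitting at distinct vertices of the same translate, which is what the $B(o,r)$ device accomplishes.
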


\begin{proof}
	For any $k$ and $x \not\in O_{p,k}$ we have 
	$k \le \lim_{t \ar \infty}d(c(t),x)-t$
	where $c$ is the geodesic from $o$ to $p$.
	If $x \in K \setminus O_{p,k}$ then since $K$ equals to the Dirichlet region at $x$ we have
	\[
		k \le d(c(t),x)-t \le d(c(t),w \cdot x)-t
	\]
	for all $w \in W$ and $t \in \R_{\ge 0}$.
	Hence we have $W \cdot x \not\in O_{p,k}$.
	
	Take a closed ball $B(o,r)$ centered at $o$
	and the radius $r$ satisfying the condition
	$r > \max\{\{d(o,[\alpha,\beta])\ \vert\ \alpha,\beta\in \Delta\},
	\{d(o,P_\alpha)\ \vert\ \alpha\in \Delta\}\}$
	where $[\alpha,\beta]$ is the segment connecting $\alpha$ and $\beta$.
	This maximum always exists since $\Delta$ is finite.
	By the definition of $r$, each component of $\overline{K'} \setminus B(o,r)$ includes
	just one vertex of $\overline{K'}$.
	Since Busemann functions are continuous and $B(o,r)$ is compact, we have $k<0$ such that
	$O_{p,k)} \cap B(o,r) = \emptyset$ for all $p \in PF$.
	
	If there exists $w \in W$ such that $O_{p,k} \neq w \cdot O_{p',k}$ and
	$O_{p,k} \cap w \cdot O_{p',k} \neq \emptyset$
	then there must be $x \in K \cap O_{p,k} \cap w'w \cdot O_{p',k}$ for some $w' \in W$ by 
	the above argument.
	Let $\xi$ be the Euclidean segment from $p$ to $x$
	and let $\eta$ be the Euclidean segment from $w'w \cdot p'$ to $x$.
	By the definition of $k>0$, $\xi$ does not intersect with 
	any face of $\partial K'$ which does not contain $p$.
	$\eta$ also does not intersect with 
	any face of $w'w \cdot \partial K'$ which does not contain $w'w \cdot p'$.
	Thus we have $x \in K' \cap w'w \cdot \overline{K'}$.
	This contradicts to the discreteness of the normalized action.
\end{proof}

Fix a constant $k$ which is smaller than the constant in the claim of Lemma \ref{bunri}.
Let $o \in D$ be the eigenvector corresponding to the negative eigenvalue of $B$
as a basepoint.
Then $o \in K'$ by \cite[Lemma 5]{m}. 
For each $p \in PF$, we take a horoball at $p$ with $k$ (based at $o$) and denote it by $O_p$.
By Proposition \ref{bunrui} we have an affine special subgroup corresponding to each $p \in PF$
uniquely. 
If $W' \subset W$ is an affine subgroup corresponding to $p \in PF$
then $w \cdot O_p = O_{w \cdot p} = O_p$ for any $w \in W'$ 
since $p$ is fixed by $W'$. 
We set $O := \{O_p\}_{p \in PF}$.

We remove the orbits of $O$ from $D$ and denote it by $D''$:
\[
	D'' = D' \setminus W \cdot O.
\]
Note that $D''$ is closed in $D$ because $O$ and $R = D \setminus \conv(\widehat{\Delta})$ 
are open.
The following is obvious.

\begin{lem}\label{stable}
	The set $D''$ is invariant under the normalized action of $W$.
\end{lem}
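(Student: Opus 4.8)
The plan is to show that $D'' = D' \setminus W\cdot O$ is carried into itself by each generator $s \in S$, and hence by all of $W$. Recall from the construction that $D'$ is already invariant under the normalized action, so it suffices to check that $W \cdot O$ is invariant — indeed a set of the form $D' \setminus A$ with $D'$ $W$-invariant is $W$-invariant iff $A$ is. First I would unwind the definition $W \cdot O = \bigcup_{p \in PF}\bigcup_{w \in W} w \cdot O_p$. For any $v \in W$ and any $w \cdot O_p$ in this union, $v \cdot (w \cdot O_p) = (vw)\cdot O_p$ is again a member of the union, since $vw \in W$. Conversely, applying $v^{-1}$ shows every member is in the image. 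Hence $v \cdot (W\cdot O) = W \cdot O$ for every $v \in W$.

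Next I would combine this with the invariance of $D'$: since $v$ acts on $D$ by the normalized action (which by construction preserves $D$ and $D'$), and since $v(D' \setminus W\cdot O) = v(D') \setminus v(W\cdot O) = D' \setminus (W\cdot O)$, we get $v \cdot D'' = D''$ for all $v \in W$. The only subtlety worth a sentence is that set-theoretic complement commutes with the bijection $v\colon D \to D$: for $A, B \subseteq D$ one has $v(A \setminus B) = v(A) \setminus v(B)$ precisely because $v$ is injective on $D$, which holds as $v$ is an isometry of $(D,d)$ by Proposition \ref{isom}.

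There is essentially no obstacle here; the statement is called ``obvious'' in the text for good reason. If one wanted to be scrupulous, the single point deserving care is that the horoballs $O_p$ were defined with a fixed basepoint $o$, so a priori $w \cdot O_p$ need not equal $O_{w\cdot p}$ on the nose (Busemann functions based at different points differ by a bounded amount, not exactly). But this does not matter for the present lemma: we only need closure of the \emph{collection} $\{w \cdot O_p : w \in W,\ p \in PF\}$ under the $W$-action, which is automatic from the group structure, not any identification $w \cdot O_p = O_{w \cdot p}$. (That identification, when $w$ lies in the stabilizing affine subgroup, was already recorded in the paragraph preceding the lemma, but it plays no role in the proof of invariance.) So the proof is a two-line bookkeeping argument: $W\cdot O$ is a union of $W$-orbits hence $W$-invariant, $D'$ is $W$-invariant, isometries commute with complementation, therefore $D'' = D' \setminus W\cdot O$ is $W$-invariant.
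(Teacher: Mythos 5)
Your argument is correct and is exactly the bookkeeping the paper has in mind: the paper states the lemma without proof (``The following is obvious''), and the intended justification is precisely that $W\cdot O$ is a union of $W$-orbits and $D'$ is $W$-invariant, so the complement $D''=D'\setminus W\cdot O$ is preserved since each $w$ acts bijectively on $D$. Your side remark that no identification $w\cdot O_p=O_{w\cdot p}$ is needed (only closure of the orbit collection) is a sound and accurate clarification, consistent with the paper's preceding paragraph.
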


%

We define $K'' := K \cap D''$. 
Then we can assume that $o \in K''$ by taking sufficiently small $k$.
Recall that $O$ contains all horoballs at the vertices of $\overline{K}$ 
which lie on $\partial D$.
This indicates that $\overline{K''}$ is bounded closed set hence compact since $D$ is proper.
Since $K$ is a fundamental region of the normalized action, Lemma \ref{stable}
says that $K''$ is a fundamental region of the normalized action on $D''$.
Define a metric $d'$ on $D''$ by letting $d'(x,y)$ be the minimum length of 
a path in $D''$ connecting $x$ and $y$.
Now we assume that $k$ is small enough so that the geodesic arc between $o$ and $s\cdot o$ 
is in $D''$ for each $s \in S$.

\begin{prop}
	$W$ acts on $(D'',d')$ isometrically.  
\end{prop}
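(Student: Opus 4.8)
The plan is to deduce this from two facts already in hand: by Proposition~\ref{isom} the normalized action of $W$ on $D$ is by isometries of the Hilbert metric $d$, and by Lemma~\ref{stable} the subset $D''$ is $W$-invariant. The metric $d'$ is the length metric on $D''$ induced by $d$: for a path $\gamma$ in $D$ its length $L(\gamma)=\sup\sum_i d(\gamma(t_{i-1}),\gamma(t_i))$ (supremum over partitions) is determined by $d$ alone, and $d'(x,y)$ is the infimum of $L(\gamma)$ over paths $\gamma$ lying in $D''$ from $x$ to $y$. Since isometries preserve path-lengths, the statement should follow essentially formally.

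In detail, fix $w\in W$ and $x,y\in D''$. First I would record that $L(w\cdot\gamma)=L(\gamma)$ for every path $\gamma$ in $D$, because $w$ acts as a $d$-isometry (Proposition~\ref{isom}). Next, because $D''$ is $W$-invariant (Lemma~\ref{stable}) and $w$ acts on $D$ as a homeomorphism with inverse $w^{-1}$, the map $\gamma\mapsto w\cdot\gamma$ is a bijection from the set of paths in $D''$ joining $x$ and $y$ onto the set of paths in $D''$ joining $w\cdot x$ and $w\cdot y$, the inverse being $\gamma\mapsto w^{-1}\cdot\gamma$. Taking the infimum of $L$ over each of these two sets and using $L(w\cdot\gamma)=L(\gamma)$ gives $d'(w\cdot x,w\cdot y)=d'(x,y)$. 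Together with the fact that each $w$ permutes $D''$ bijectively, this is precisely an isometric action of $W$ on $(D'',d')$.

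What really needs care is only the preliminary point that $d'$ is a bona fide metric. Symmetry and the triangle inequality are immediate; positivity follows from $d'(x,y)\ge d(x,y)$, since any path in $D''$ is a path in $D$ and $d$ is geodesic; and the fact that the infimum is attained (so that ``minimum length of a path'' makes sense) follows from $D$ being proper and $\overline{K''}$ compact, by a standard compactness argument on paths of bounded length. The one genuinely geometric point is that $d'$ is finite, i.e.\ that $D''$ is path-connected: one removes from the convex set $D$ the open set $W\cdot R$ (so as to stay inside the $W$-translates of $\conv(\widehat{\Delta})$) and the pairwise disjoint horoballs forming $W\cdot O$ (disjoint by Lemma~\ref{bunri}), and one must check that the set thus obtained, covered by the compact tiles $w\cdot\overline{K''}$, remains connected. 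This last verification is where I expect the actual work to lie, the isometry assertion itself being formal once $d'$ is known to be a metric.
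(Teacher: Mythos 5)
Your argument is essentially the paper's own proof: the paper likewise fixes $w\in W$, uses Proposition~\ref{isom} to see that partition sums (hence path lengths) are preserved, uses the $W$-invariance of $D''$ from Lemma~\ref{stable} to move paths between $a,b$ and $w\cdot a, w\cdot b$, and concludes $d'(a,b)=d'(w\cdot a,w\cdot b)$ by taking infima. Your additional remarks on $d'$ being a genuine (finite, attained) metric go beyond what the paper verifies, but the isometry statement itself is established the same way.
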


\begin{proof}
	Fix $w \in W$ and $a,b \in D''$ arbitrary.
	Let $\sigma$ be a path in $D''$ connecting $a$ and $b$.
	Then for any $\epsilon > 0$ there exists a partition 
	$\{c_0=a,c_1,\ldots,c_n=b\}$ of $\sigma$ such that 
	\[
		\ell(\sigma) \le \sum_{i=1}^n d(c_{i-1},c_i) + \epsilon. 
	\]
	Since $W$ acts on $(D,d)$ isometrically we have 
	\[
		\sum_{i=1}^n d(c_{i-1},c_i) = \sum_{i=1}^n d(w\cdot c_{i-1},w \cdot c_i)
		\le \ell(w\cdot \sigma).
	\]
	Hence $\ell(\sigma) \le \ell(w\cdot \sigma)$.
	This implies that $d'(a,b) \le d'(w\cdot a,w\cdot b)$ and 
	the reverse is showed in the same way.
	Thus we have $d'(a,b) = d'(w\cdot a,w\cdot b)$.
\end{proof}

\begin{lem}\label{kuraberu}
	Under the notations above, there exist constants $l$ and $l'$ so that
	\[
		l\ell(w)\le d'(o,w\cdot o) \le l'\ell(w)
	\]
	for all $w \in W$.
\end{lem}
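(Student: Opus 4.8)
The plan is to recognise Lemma~\ref{kuraberu} as an instance of the Milnor--\v{S}varc lemma. By Proposition~\ref{isom} and Lemma~\ref{stable}, $W$ acts by isometries, properly discontinuously, and --- since $\overline{K''}$ is compact and $K''$ is a fundamental region by Lemma~\ref{stable} --- cocompactly on the length space $(D'',d')$, so the orbit map $w\mapsto w\cdot o$ is a quasi-isometry $(W,|\cdot|)\to(D'',d')$. The only extra step beyond the standard argument is to absorb the additive error of a quasi-isometry into a purely multiplicative bound, using that $W$ has only finitely many elements of any given word length.

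For the upper bound I would write a reduced expression $w=s_{i_1}\cdots s_{i_m}$ with $m=\ell(w)$. Since $D''$ contains the $d$-geodesic segment from $o$ to $s\cdot o$ for every $s\in S$ (our standing assumption on $k$), we have $d'(o,s\cdot o)=d(o,s\cdot o)<\infty$; put $l':=\max_{s\in S}d'(o,s\cdot o)$, which is finite because $S$ is finite. Using that $W$ acts on $(D'',d')$ by isometries, a telescoping application of the triangle inequality gives
\[
	d'(o,w\cdot o)\;\le\;\sum_{j=1}^{m}d'\bigl(s_{i_1}\cdots s_{i_{j-1}}\cdot o,\; s_{i_1}\cdots s_{i_j}\cdot o\bigr)\;=\;\sum_{j=1}^{m}d'(o,s_{i_j}\cdot o)\;\le\;l'\,\ell(w),
\]
which incidentally shows $d'$ is finite on the orbit $W\cdot o$.

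For the lower bound, recall that $\overline{K''}$ is compact; by proper discontinuity the translates $\{w\cdot\overline{K''}\}_{w\in W}$ form a locally finite, hence closed, cover of $D''$. Given $w\in W$, choose a path $\gamma$ in $D''$ from $o$ to $w\cdot o$ of length at most $d'(o,w\cdot o)+1$ and subdivide it into $N$ subarcs each of $d'$-length $\le 1$, so that $N\le d'(o,w\cdot o)+2$; at the division points pick $w_j\in W$ with $x_j\in w_j\cdot\overline{K''}$ and $w_0=\id$, $w_N=w$. With $D_0:=\diam_{d'}(\overline{K''})<\infty$, the triangle inequality gives $d'(o,w_{j-1}^{-1}w_j\cdot o)\le 2D_0+1$ for each $j$, so every $w_{j-1}^{-1}w_j$ lies in the finite set $F:=\{g\in W:\ d'(o,g\cdot o)\le 2D_0+1\}$ (finiteness by proper discontinuity, Proposition~\ref{isom}). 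Since $w=(w_0^{-1}w_1)(w_1^{-1}w_2)\cdots(w_{N-1}^{-1}w_N)$ is a product of $N$ elements of $F$, we get $\ell(w)\le MN\le M\bigl(d'(o,w\cdot o)+2\bigr)$ with $M:=\max_{g\in F}\ell(g)$, that is $d'(o,w\cdot o)\ge\frac1M\ell(w)-2$.

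Finally, to reach the stated form: for $\ell(w)\ge 3M$ the last bound gives $d'(o,w\cdot o)\ge\frac{1}{3M}\ell(w)$, while for the finitely many $w\neq\id$ with $1\le\ell(w)<3M$ we have $d'(o,w\cdot o)>0$ (because $o\in K''$ but $w\cdot o\notin K''$, $K''$ being a fundamental region), so $\delta:=\min\{d'(o,w\cdot o)/\ell(w):\ 1\le\ell(w)<3M\}>0$; taking $l:=\min\{\frac{1}{3M},\delta\}$ (the case $w=\id$ being trivial) yields $l\,\ell(w)\le d'(o,w\cdot o)\le l'\,\ell(w)$ for all $w\in W$. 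The \emph{main obstacle} is the lower bound, and its crux is the compactness of $\overline{K''}$ together with proper discontinuity of the normalized action on $D''$; everything else is bookkeeping with the triangle inequality and with finiteness of word-length balls in $W$.
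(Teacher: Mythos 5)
Your proof is correct and follows essentially the same route as the paper, which proves the upper bound by the telescoping triangle inequality with $l'=\max_{s\in S}d'(o,s\cdot o)$ and the lower bound by Floyd's Milnor--\v{S}varc-type chaining (subdividing a path from $o$ to $w\cdot o$ into boundedly many pieces of controlled $d'$-length and bounding the word length of each piece by a constant coming from compactness of $\overline{K''}$ and proper discontinuity). The only difference is bookkeeping: you subdivide at scale $1$ and explicitly absorb the additive error into a multiplicative constant, whereas the paper subdivides at scale $5d(K'')$ and leaves that last step implicit.
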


We see this by the same way as \cite[Lemma in p.213]{f}.

\begin{proof}
	The right hand inequality holds with $l' = \max\{d'(o,s\cdot o)\ \vert\ s \in S\}$
	by the triangle inequality.
	
	We prove the other inequality. 
	Let $d(K'')$ be the diameter of $K''$ and let 
	$C = \max\{\ell(w)\ \vert\ w\in W, d'(o,s\cdot o) \le 7d(K'')\}$.
	For any $w \in W$, we divide a geodesic from $o$ to $w\cdot o$ in $D''$
	into intervals of length $5d(K'')$.
	We have $d'(o,w\cdot o)/5d(K'')$ intervals whose length is $5d(K'')$ and one shorter interval.
	This gives the estimate 
	\[
		\ell(w) \le C\left(1+\frac{d'(o,w\cdot o)}{5d(K'')}\right),
	\]
	and hence the lemma holds.
\end{proof}

We need to compute how the metric $d'$ differs from the metric $d$.
Now Lemma \ref{hyp} ensures that horoballs in $D$ are mapped to horoballs in $\widetilde{D}$.
It is well known that the space $(\widetilde{D},d_A)$ in  Lemma \ref{hyp}
is isometric to the hyperbolic space $(\h^n,d_{\h})$ of the upper half plane model.
In $(\h^n,d_{\h})$
we can compare the hyperbolic distance of two points on a horosphere and the length of a path 
on that horosphere (for the precise computation see \cite[p.214-p.215]{f}).
For $x,y$ on horosphere in $(\h^n,d_{\h})$ 
we denote $c$ as an arc on horosphere joining $x$ and $y$.
Then we have
\begin{equation*}\label{zuru}
	\text{the\ hyperbolic\ length\ of\ $c$} \le \exp\left(\frac{d_\h(x,y)}{2}\right),
\end{equation*}
and hence 
\begin{equation}\label{hikaku}
	2\left(\log d'(x,y)\right) \le d(x,y).
\end{equation}

Consequently we obtain a constant $C> 0$ from 
Lemma \ref{kuraberu}, we have the following.

\begin{lem}
	For a Coxeter group $W$ of type $(n-1,1)$,
	there exists a constant $C > 0$ so that 
	\[
		2(\log \ell(w)) - C \le d(o,w\cdot o)
	\]
	for all $w \in W$.
\end{lem}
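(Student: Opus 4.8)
The goal is to combine the two metric comparisons already established: Lemma~\ref{kuraberu}, which says $d'(o,w\cdot o) \ge l\,\ell(w)$ for some $l>0$, and inequality \eqref{hikaku}, which says $2\log d'(x,y) \le d(x,y)$ for points $x,y$ lying on a common horosphere. The plan is to apply \eqref{hikaku} with $x=o$ and $y = w\cdot o$. There is a catch, however: inequality \eqref{hikaku} was derived for pairs of points \emph{on the same horosphere} of $\widetilde D \cong \h^n$, whereas $o$ and $w\cdot o$ need not lie on any common horosphere. So the first step is to reinterpret $d'$ correctly: $d'(o,w\cdot o)$ is the length of a shortest path in $D''$, and such a path travels through $D$ and skirts around the removed horoballs $W\cdot O$ along their bounding horospheres. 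The relevant inequality is therefore really the statement that along each horospherical detour the extra path-length is at most exponential in the hyperbolic ``chord'' distance it replaces; summing these contributions (there are only finitely many horoball-crossings along a $d$-geodesic of bounded length, by Lemma~\ref{bunri} and properness) yields $d'(o,w\cdot o) \le \exp(d(o,w\cdot o)/2 + C_0)$ for a uniform constant $C_0$, which is the genuine content of \eqref{hikaku} in the form needed here.

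Granting that, the argument is short. First I would take logarithms of the comparison $d'(o,w\cdot o) \le \exp\!\big(d(o,w\cdot o)/2 + C_0\big)$ to get
\[
	2\log d'(o,w\cdot o) \le d(o,w\cdot o) + 2C_0.
\]
Next, feeding in the lower bound $d'(o,w\cdot o) \ge l\,\ell(w)$ from Lemma~\ref{kuraberu} (valid for $w\neq \id$; the statement is trivial for $w=\id$), monotonicity of $\log$ gives
\[
	2\log\big(l\,\ell(w)\big) = 2\log\ell(w) + 2\log l \le 2\log d'(o,w\cdot o) \le d(o,w\cdot o) + 2C_0.
\]
Rearranging, $2\log\ell(w) - C \le d(o,w\cdot o)$ with $C := 2C_0 - 2\log l$, which is exactly the asserted inequality; absorbing the finitely many short elements $w$ with $\ell(w)$ small (where the left side is bounded) into the constant $C$ handles any edge cases.

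The main obstacle, as flagged above, is the passage from the ``same-horosphere'' comparison \eqref{hikaku} to a comparison of $d'$ and $d$ for the \emph{arbitrary} pair $(o, w\cdot o)$. This requires knowing that a $d$-geodesic from $o$ to $w\cdot o$ meets the family $W\cdot O$ of horoballs in a controlled way: by Lemma~\ref{bunri} the horoballs in $W\cdot O$ are pairwise disjoint, so a geodesic of $d$-length $R$ can enter at most $O(R)$ of them, and on each one the $d'$-path replaces a chord by a horospherical arc of length at most exponential in the chord length — but the chords are short (bounded in terms of $k$) precisely because the geodesic only clips the horoballs rather than plunging deep into them. Making this ``only clips'' statement quantitative, so that the accumulated logarithmic losses telescope into a single additive constant $C$ independent of $w$, is the step that needs the careful estimate; everything after it is the one-line logarithm-and-substitute computation above.
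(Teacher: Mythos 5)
Your argument is essentially the paper's own: the paper proves this lemma in one line by combining Lemma~\ref{kuraberu} with the comparison \eqref{hikaku} (which it takes, via the citation to \cite[pp.~213--215]{f}, as a bound valid for pairs of points of $D''$, exactly the form $2\log d'(o,w\cdot o)\le d(o,w\cdot o)+C$ you use) and then taking logarithms, so your main computation coincides with the intended proof. One small caution on your side discussion: the chords a $d$-geodesic cuts through the horoballs need \emph{not} be short (a geodesic between far-apart points on a horosphere plunges deep into the horoball), but this does not matter, since Floyd's estimate bounds each horospherical detour by $\exp(\text{chord}/2)$ whatever the chord length, which is all that is needed to get $d'\le A e^{d/2}$ and hence the constant $C$.
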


\section{The Gromov boundary and the Cannon-Thurston map}
In this section we discuss the existence of the Cannon-Thurston map 
from the Gromov boundary $\partial_G W$ to the limit set $\Lambda(W)$
for a Coxeter group with higher rank cusps.

\subsection{The Gromov boundaries}
The Gromov boundary of a hyperbolic space is one of the most studied boundary
at infinity.
In this section we define it for an arbitrary metric space due to \cite{BK}.

Let $(X,d,o)$ be a metric space with a base point $o$. 
We denote simply $(* \vert *)$ as the Gromov product with respect to 
the base point $o$.
A sequence $x = \{x_i\}_i$ in $X$ is a {\it Gromov sequece} if
$(x_i\vert x_j)_z \ar \infty$ as $i,j \ar \infty$ for any base point $z \in X$.
Note that if $(x_i\vert x_j)_z \ar \infty\ (i,j \ar \infty)$ for some $z \in X$ then 
for any $z' \in X$ we have $(x_i\vert x_j)_{z'} \ar \infty\ (i,j \ar \infty)$.

We define a binary relation $\sim_G$ on the set of Gromov sequences as follows.
For two Gromov sequences $x = \{x_i\}_i, y=\{y_i\}_i$, $x \sim_G y$ if 
$\liminf_{i,j \ar \infty}(x_i \vert y_j) = \infty$.
Then we say that two Gromov sequences $x$ and $y$ are equivalent $x \sim y$
if there exist a finite sequence $\{x=x_0,\ldots,x_k=y\}$ such that 
\[
	x_{i-1} \sim_G x_i \ \ {\text{for}}\ \ i = 1,\ldots,k.
\]
It is easy to see that the relation $\sim$ is an equivalence relation on the set of 
Gromov sequences.
The {\it Gromov boundary} $\partial_G X$ is the set of 
all equivalence classes $[x]$ of Gromov sequences $x$.
If the space $X$ is a finitely generated group $G$ then the Gromov boundary of $G$ depends on the
choice of the generating set in general.
In this paper we always define the Gromov boundary of a Coxeter group $W$
using the generating set of the Coxeter system $(W,S)$.
We shall use without comment the fact that every Gromov sequence is equivalent to each of
its subsequences.
To simplify the statement of the following definition, we denote a point $x \in X$ by 
the singleton equivalence class $[x]=[\{x_i\}_i]$ where $x_i=x$ for all $i$.  
We extend the Gromov product with base point $o$ to 
$(X\cup \partial_GX) \times ( X\cup \partial_GX)$ via the equations
\begin{align*}
	(a\vert b) &= 
	\begin{cases}
	\ \inf \left\{\left.\liminf_{i,j \ar \infty} (x_i \vert y_j )\ \right\vert\ 
						[x] = a,\ [y] = b\right\},\quad \text{if}\ a \not= b, \\
	\ \infty, \quad \text{if}\ a = b.
	\end{cases}
\end{align*}

We set
\[
	U(x,r) := \{y\in \partial_G X \ \vert\ 
	(x\vert y) > r\}
\]
for $x \in \partial_G X$ and $r>0$
and define $\mathcal{U} = \{U(x,r)\ \vert\ x \in \partial_G X, r>0\}$.
The Gromov boundary $\partial_G X$ can be regarded as a topological space 
with a subbasis $\mathcal{U}$.

If the space $X$ is $\delta$-hyperbolic in the sense of Gromov,
then this topology is equivalent to a topology defined by the following metric.
For $\epsilon > 0$ satisfying $\epsilon \delta \le 1/5$, we define $d_\epsilon$ as follows:
\[
	d_\epsilon(a,b) = e^{-(a\vert b)} \quad (a,b \in \partial_G X).
\]
Then it follows from 5.13 and 5.16 in \cite{vaisala} that 
$d_\epsilon$ is actually a metric.
In this paper, we always take $\epsilon$ so that $\epsilon \delta \le 1/5$
for all $\delta$ hyperbolic spaces $X$ and 
assume that $\partial_G X$ is equipped with $d_\epsilon$-topology.

\begin{rem}\label{cat}
	Since $D$ is isometric to $\h^{n-1}$,
	it is a proper complete Gromov hyperbolic CAT(0) space.
	Hence we see that $\partial_G D$
	is homeomorphic to $\partial D$ (\cite[III.H.3.7 and II.8.11.(2)]{BH}).
\end{rem}

If an isometric group action $G \curvearrowright X$ on a Gromov hyperbolic space $X$
is properly discontinuous and cocompact then 
the group $G$ is also hyperbolic in the sense of Gromov and it is called 
a hyperbolic group (see \cite{vaisala}).

\begin{rem}\label{chikai}
Next we consider a geodesic (hence a Euclidean line) 
$\gamma$ in $D$ between two points on a horoball.
By Lemma 4.17 two metrics $d_\h$ on $\h^n$ and $d$ on $D$ shows that 
a horoball in $D$ is mapped to a horoball in $\h^n$.
Hence we can map a horoball in $D$ to a horoball
$O_\h=\{(x_1,\ldots,x_n) \in \h^n\ \vert\ x_n > c\}$ for some $c>0$.
We denote $o' \in \h^n$ as the image of the base point $o \in D$.
Let $x',y' \in \h^n$ be the image of the end points $x,y$ of $\gamma$ and let
$z'$ be the image of the nearest point $z$ of $\gamma$ from $o$.
The geodesic $\gamma'$ in $\h^n \setminus O_\h$ for the length metric is a straight line on 
$\partial H$ and it is nothing but the nearest point projection of the image of $\gamma$ to 
the plane $\{(x_1,\ldots,x_n) \in \h^n\ \vert\ x_n = c\}$.
Therefore the distance from $o'$ to $\gamma'$ is bounded above by $2d_\h(o',z')$.
\end{rem}

More generally we consider a geodesic with respect to $d'$ of $x,y \in D''$.
We denote $\xi$ as the geodesic connecting $x,y$.
Let $\{O_i\}$ be the set of horoballs which intersects with $\xi$.
Then by \cite[Theorem 8.1]{McM} we see that the geodesic with respect to $d'$ lies 
in the $R$ neighborhood of $\xi \cup \bigcup_{i} O_i$ where $R$ is a universal constant.
Now we consider segments $\xi_i = \xi \cap O_i$ and let $x_i,y_i$ be the end points of $\xi_i$.
Let $\xi'_i$ be the geodesic in $D''$ connecting $x_i,y_i$ and let $\xi'$ be 
the path given by replacing each $\xi_i$ with $\xi'_i$.
Recall that the geodesic in $\h^n$ is unique and the fact that the geodesic between two points
on a horoball $O_\h=\{(x_1,\ldots,x_n) \in \h^n\ \vert\ x_n > c\}$ for some $c>0$
is an Euclidean straight line on $\partial O_\h$.
Because of this, we see that there exists another universal constant $R'$ such that
the geodesic $\xi$ lies in the $R'$ neighborhood of $\xi'$.
 
Let $F : W \ar D''$ be the quasi isometry defined by  
$F(w) = w\cdot o$ for every $w \in W$ and 
if $w = w's$ for some $s \in S$ then $F$ maps the edge
joining the vertices $w,w' \in W$ to the geodesic $[w\cdot o,w'\cdot o]$.

\begin{lem}\label{geogeo}
	There exists a constant $P>0$ satisfying the following.
	For any $x,y \in W \cdot o$
	there exists a geodesic $\gamma$ in $W$ which $F(\gamma)$ connects $x,y$ 
	such that is in bounded Hausdorff distance with $P$ from a geodesic connecting $x,y$ in $D''$.
\end{lem}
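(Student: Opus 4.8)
The plan is to recognise $F$ as a quasi-isometry from the Cayley graph of $W$ onto the geodesic space $(D'',d')$, to observe that both spaces are Gromov hyperbolic, and then to apply the stability of quasi-geodesics. So first I would record that $F$ is a quasi-isometry. Since $\overline{K''}$ is compact and $K''$ is a fundamental region for the normalized action of $W$ on $(D'',d')$, the group $W$ acts on $(D'',d')$ properly discontinuously and cocompactly, with $\overline{W\cdot K''}=D''$; moreover $(D'',d')$ is a proper geodesic space, being a length space with $D''$ closed in the proper space $D$ (Hopf--Rinow, \cite[I.3.7]{BH}). Because the action is by isometries, Lemma \ref{kuraberu} gives constants $l,l'>0$ with
\[
	l\,\ell(u^{-1}v)\ \le\ d'(u\cdot o,v\cdot o)\ \le\ l'\,\ell(u^{-1}v)\qquad(u,v\in W),
\]
and, since $F$ sends each Cayley edge $[w,ws]$ to the $d'$-geodesic $[w\cdot o,ws\cdot o]$ of length $d'(o,s\cdot o)\le l'$, the map $F$ extends to a $(\lambda,\varepsilon)$-quasi-isometry of the Cayley graph onto $(D'',d')$ for suitable $\lambda\ge1$, $\varepsilon\ge0$ (coarse surjectivity follows from cocompactness, since $o\in K''$). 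By the \v{S}varc--Milnor lemma (\cite[I.8.19]{BH}) the Cayley graph of $W$ is then a geodesic space quasi-isometric to $(D'',d')$.

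Second, I would check that $(D'',d')$ is Gromov hyperbolic. By Lemma \ref{hyp}, $D$ is isometric to $\h^{n-1}$, and $D''$ is obtained from $D$ by deleting a $W$-invariant family of pairwise disjoint horoballs (Lemma \ref{bunri}) and passing to the induced length metric. As recalled in Remark \ref{chikai}, \cite[Theorem 8.1]{McM} shows that a $d'$-geodesic between two points of $D''$ lies in a universally bounded neighbourhood of the union of a $D$-geodesic $\xi$ with the horoballs it meets, and conversely that $\xi$ lies in a universally bounded neighbourhood of the corresponding path in $D''$. This uniform two-sided comparison transports the thin-triangles condition for $\h^{n-1}$ to $(D'',d')$ with an enlarged constant, so $(D'',d')$ is $\delta'$-hyperbolic for some $\delta'$ (and hence, by the previous paragraph, so is the Cayley graph of $W$).

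Third, given $x=w_1\cdot o$ and $y=w_2\cdot o$ in $W\cdot o$, I would take a geodesic $\gamma$ in the Cayley graph of $W$ from $w_1$ to $w_2$. Since $F$ is a $(\lambda,\varepsilon)$-quasi-isometry whose domain is geodesic, the path $F(\gamma)$, parametrised by arclength, is a $(\lambda,\varepsilon')$-quasi-geodesic of $(D'',d')$ joining $x$ and $y$ for some $\varepsilon'$. By the stability of quasi-geodesics in the $\delta'$-hyperbolic geodesic space $(D'',d')$ (\cite[III.H.1.7]{BH}) there is a constant $P=P(\lambda,\varepsilon',\delta')>0$, independent of $x$ and $y$, such that $F(\gamma)$ and any $d'$-geodesic joining $x$ and $y$ lie at Hausdorff distance at most $P$. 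This is precisely the assertion.

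The step I expect to be the main obstacle is the hyperbolicity of $(D'',d')$: the passage from $\h^{n-1}$ to its ``neutered'' version $(D'',d')$ must be controlled uniformly, and this is exactly where the estimates behind Remark \ref{chikai} --- namely \cite[Theorem 8.1]{McM} --- enter, together with the verification that $(D'',d')$ is an honest proper geodesic space (so that $F(\gamma)$ really is a quasi-geodesic). Once these are in place, everything after is a formal application of the \v{S}varc--Milnor lemma and the Morse lemma.
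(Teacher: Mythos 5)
Your argument breaks down at the second step, and the failure is not repairable: $(D'',d')$ is \emph{not} Gromov hyperbolic in the situation this lemma is designed for. As you yourself note, $W$ acts properly discontinuously, cocompactly and isometrically on the proper length space $(D'',d')$, so by \v{S}varc--Milnor $(D'',d')$ is quasi-isometric to $W$; but the whole point of the present paper is the case (ii) with cusps of rank $m>2$ (the proposition following Lemma \ref{geogeo} explicitly assumes this), in which $W$ contains an affine special subgroup and hence a subgroup $\Z^{m-1}$ with $m-1\ge 2$, so $W$ is not a hyperbolic group and neither is $(D'',d')$. Concretely, the horospheres bounding the removed horoballs carry the induced length metric of Euclidean $(n-2)$-space and are (quasi-)isometrically embedded flats in $(D'',d')$; a triangle with its three vertices far apart on one such horosphere is as fat as a Euclidean triangle. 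This also shows why your attempt to ``transport'' the thin-triangle condition from $\h^{n-1}$ via the McMullen comparison of Remark \ref{chikai} cannot work: that comparison only says a $d'$-geodesic fellow-travels the union of a $D$-geodesic with the (unbounded) horoballs it meets, and the detours along flat horospheres are exactly what destroy hyperbolicity. With hyperbolicity gone, the Morse lemma in your third step is unavailable, and in fact the stronger statement you are trying to prove --- that \emph{every} word geodesic $\gamma$ has $F(\gamma)$ uniformly Hausdorff-close to the $d'$-geodesic --- is false: two ``staircase'' word geodesics inside an affine ($\Z^{m-1}$-like) subgroup between the same endpoints can diverge linearly, so at most one of them can track the $d'$-geodesic.

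Note that the lemma only asserts the \emph{existence} of one good geodesic $\gamma$, and the paper's proof exploits this: it takes the straight $D$-geodesic $\xi$ from $x$ to $y$, reads off the gallery of chambers $w_1\cdot K,\dots,w_k\cdot K$ that $\xi$ crosses to produce a specific reduced word $\gamma$ (deletion condition), replaces the subsegments of $\xi$ inside horoballs by the corresponding $d'$-geodesics along the horospheres to get a path $\tau$ crossing the same chambers, and then compares $\tau$ with the actual $d'$-geodesic using the tracking estimate of Remark \ref{chikai} (McMullen) and with $F(\gamma)$ using the bounded diameter of $K''$. No hyperbolicity of $(D'',d')$ is used anywhere, and that is essential. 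Your first step (that $F$ is a quasi-isometry, via Lemma \ref{kuraberu}) is fine but does not by itself yield the conclusion.
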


\begin{proof}
	Take the geodesic $\xi$ between $x$ and $y$ in $D$.
	Then $\xi$ crosses some orbits of $K$ and some horoballs.
	We claim that a curve $\tau$ given by replacing all segments of $\xi$ which cross
	horoballs with the geodesics in $D''$ connecting each end points
	crosses the same orbits of $K''$ as $\xi$.
	
	We remark that $\xi$ gives a geodesic $\gamma$ in $W$ in the following way.
	Let $w_1\cdot K,\ldots,w_k\cdot K$ be the orbits which are crossed by $\xi$.
	Then for each $i \in \{1,\ldots,k-1\}$, there exists a $B$-reflection $s_i \in S$ such that
	$w_{i+1} = s_iw_i$.
	We see that $\gamma = s_{k-1}\cdots s_1$ is a geodesic in $W$ 
	by the deletion condition of Coxeter system $(W,S)$
	(more precisely, see the proof of \cite[Proposition 4.12]{m} or \cite[Corollary 3.2.7]{davis}).
	We see that $F(\gamma)$ equals to the path connecting $w_1 \cdot o,\ldots,w_k \cdot o$
	with geodesics.

	Let $O$ be a horoball in $D$ which intersects with $\xi$ and let $O'$ be 
	the horoball $\{(x_1,\ldots,x_n) \in \h^n\ \vert\ x_n > c\}$ in $\h^n$ isometric to $O$.
	We denote $x$ and $y$ as the endpoints of a segment $\xi_0$ in $\xi$ which crosses $O$,
	and let $x',y' \in \h^n$ be the corresponding points to $x,y$ each other.
	We notice that $x'$ and $y'$ lie on 
	$\partial O' = \{(x_1,\ldots,x_n) \in \h^n\ \vert\ x_n = c\}$.
	Consider the geodesic $l$ in $\h^n \setminus O'$ connecting $x'$ and $y'$ for the length metric.
	Then $l$ is the Euclidean straight line connecting $x',y'$ and it crosses the same 
	image of orbits of $K$ as image of $\xi_0$.
	This is because all the orbits of $K$ crossing to $O$ are
	the orbits of an affine special subgroup whose elements of infinite order 
	fix the point of tangency on $O$,
	and $l$ is given by the orthogonal projection to $\partial O'$ 
	of a hyperbolic geodesic with respect to the Euclidean inner product. 
	
	Thus resulting curve $\tau$ crosses the same orbits of $K''$ as $\xi$.
	More of this since $\tau$ lies in the orbit of $\overline{K''}$ and 
	the diameter of $K''$ is bounded,
	the Hausdorff distance between 
	the paths $F(\gamma)$ and $\tau$ is bounded by the diameter of $K''$. 
	
	By the remark described before this lemma,
	we see that the geodesic in $D''$ between $x,y$ is on bounded distance
	with universal constant $R'$ from $\tau$.
	
	Putting $P = R' + \text{(diameter of $K''$)}$, we have the conclusion.
\end{proof}

\begin{dfn}\label{seq}
	Let $(W,S)$ be a Coxeter system.
	For a sequence $\{w_k\}_k$ in $W$, a path in $V_1$
	is a {\it sequence path} for $\{w_k\}_k$ if the path is given by 
	connecting segments $[w_k \cdot o,w_{k+1} \cdot o]$ for all $k \in \N$.
\end{dfn}

In \cite[Theorem 5.2]{KN},
Karlsson and Noskov showed the following useful theorem.

\begin{thm}[Karlsson-Noskov]\label{kn}
	Let $\{x_n\}_n$ and $\{z_n\}_n$ be two sequences consisting of points in $\widetilde{D}$.
	Assume that $x_n \ar \overline{x} \in \partial \widetilde{D}$,
	$z_n \ar \overline{z} \in \partial \widetilde{D}$ and 
	$[\overline{x},\overline{z}] \not\subseteq \partial \widetilde{D}$,
	where $[\overline{x},\overline{z}]$ is a segment
	connecting $\overline{x}$ and $\overline{z}$.
	Then there exists a constant $M = M(\overline{x},\overline{z})$ such that 
	for the Gromov product $(x_n \vert z_n)_y$ in Hilbert distance relative to 
	some fixed point $y \in \widetilde{D}$ we have 
	\[
		\limsup_{n \ar \infty} (x_n \vert z_n)_y \le M. 
	\]
\end{thm}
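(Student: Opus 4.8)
The plan is to extract the cancellation hidden in the Gromov product $(x_n\vert z_n)_y$ — which a priori is a difference of three $d_A$-distances, each tending to $\infty$ — from the elementary observation that, when Euclidean segments are $d_A$-geodesics of $\widetilde{D}$ (as they are, $\widetilde{D}$ being a projective model; cf.\ the discussion preceding Lemma~\ref{hyp}), one has $(x_n\vert z_n)_y\le d_A(w,y)$ for \emph{every} point $w$ on the Euclidean segment $[x_n,z_n]$. One then chooses such points $w=w_n$ tending to an interior point of the limiting chord $[\overline{x},\overline{z}]$, which geometrically is a point on the bi-infinite geodesic line $\ell:=[\overline{x},\overline{z}]\cap\widetilde{D}$. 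Note that the hyperbolicity of $(\widetilde{D},d_A)$ is not used.

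First I would record the inequality. For $w\in[x_n,z_n]$, geodesic additivity along the Euclidean segment gives $d_A(x_n,z_n)=d_A(x_n,w)+d_A(w,z_n)$, while the triangle inequality gives $d_A(x_n,y)\le d_A(x_n,w)+d_A(w,y)$ and $d_A(z_n,y)\le d_A(z_n,w)+d_A(w,y)$; adding the last two and subtracting $d_A(x_n,z_n)$ yields $2(x_n\vert z_n)_y=d_A(x_n,y)+d_A(z_n,y)-d_A(x_n,z_n)\le 2d_A(w,y)$, i.e.\ $(x_n\vert z_n)_y\le d_A(w,y)$.

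Next I would produce the point. The hypothesis $[\overline{x},\overline{z}]\not\subseteq\partial\widetilde{D}$ forces $\overline{x}\neq\overline{z}$ and the existence of a point $p=(1-t)\overline{x}+t\overline{z}\in\widetilde{D}$ with $t\in(0,1)$; this is the only place the hypothesis enters. Put $w_n:=(1-t)x_n+tz_n\in[x_n,z_n]$. Since $x_n\to\overline{x}$ and $z_n\to\overline{z}$ in the Euclidean topology of the closed ellipsoid $\widetilde{D}\cup\partial\widetilde{D}$, we have $w_n\to p$, so $w_n\in\widetilde{D}$ for all large $n$ ($\widetilde{D}$ being open), and, as one checks, $d_A(w_n,p)\to 0$. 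This last step needs a short computation: in the cross-ratio expression for $d_A(w_n,p)$ the two boundary endpoints of the chord through $w_n$ and $p$ stay at Euclidean distance bounded below from $p$ (because $p\in\widetilde{D}$), so the cross-ratio tends to $1$. Hence $d_A(w_n,y)\to d_A(p,y)$, and the inequality above gives $\limsup_{n\to\infty}(x_n\vert z_n)_y\le d_A(p,y)$.

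Finally, as $p$ was an arbitrary point of $\ell$, I would set $M=M(\overline{x},\overline{z}):=\inf\{d_A(p,y)\mid p\in\ell\}=d_A(y,\ell)<\infty$ and conclude $\limsup_{n\to\infty}(x_n\vert z_n)_y\le M$. The argument carries no real depth; the two points to be handled with care are that the hypothesis on $[\overline{x},\overline{z}]$ is exactly what furnishes the interior point $p$ on the limiting chord, and that the convergence $d_A(w_n,p)\to 0$ is genuinely justified from the cross-ratio formula (the chord endpoints cannot accumulate at the interior point $p$) rather than merely asserted.
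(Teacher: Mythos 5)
Your proof is correct, but note that the paper itself contains no proof of this statement: it is quoted verbatim as Theorem 5.2 of Karlsson--Noskov \cite{KN}, so there is no internal argument to compare against. What you have written is a valid self-contained proof, and it is essentially the Karlsson--Noskov argument specialized to the ellipsoid: the Gromov product $(x_n\vert z_n)_y$ is bounded by $d_A(y,w)$ for any $w$ on a geodesic from $x_n$ to $z_n$ (your additivity-plus-triangle-inequality computation, using that Euclidean segments are Hilbert geodesics, cf.\ Lemmas \ref{henkan} and \ref{1}), and the chords $[x_n,z_n]$ converge to the limiting chord $[\overline{x},\overline{z}]$, which meets $\widetilde{D}$ precisely because of the hypothesis, so these geodesics pass within uniformly bounded $d_A$-distance of a fixed interior point $p$. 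Your care on the two delicate points is appropriate: the hypothesis is exactly what yields $p\in\widetilde{D}$ (the segment lies in the closed convex set $\widetilde{D}\cup\partial\widetilde{D}$, so a point off the boundary is interior), and the convergence $d_A(w_n,p)\to 0$ does follow from the cross-ratio formula since the chord endpoints stay a definite Euclidean distance from $p$. Two side remarks: for the ellipsoid the hypothesis $[\overline{x},\overline{z}]\not\subseteq\partial\widetilde{D}$ reduces to $\overline{x}\neq\overline{z}$ (the boundary contains no nondegenerate segment), whereas your argument as written works for an arbitrary bounded convex domain, which is the generality in which \cite{KN} prove it; and your constant $M=d_A(y,\ell)$ depends on $y$ as well as on $\overline{x},\overline{z}$, which is consistent with the intended reading of the statement (the basepoint $y$ is fixed beforehand).
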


This implies that if two unbounded sequences $x_n$ and $z_n$ in $\widetilde{D}$ 
are equivalent in the sense of Gromov,
then these sequences converge to the same point in $\partial \widetilde{D}$.

We have the Cannon-Thurston map for a Coxeter group with higher rank cusps directly.
We remind the following fact.
Let $(X,d)$ be a $\delta$-hyperbolic space.
For any $x,y,o \in X$, let $z$ be an arbitrary point on a geodesic connecting $x,y$.
In a $\delta$-hyperbolic space, by the definition, $\delta \ge \min\{d(z,[o,x]),d(z,[o,y])\}$.
Hence we have $d(o,z) \ge (x\vert y)_o$.
If $z$ is the nearest point of a geodesic $[x,y]$ from $o$,
then we obtain $(x\vert y)_o \ge d(o,z) - \delta$ (\cite[2.33]{vaisala}).
Thus 
\[
	d(o,z) \ge (x\vert y)_o \ge d(o,z) - \delta
\] 
for such a point.

\begin{prop}
	Assume that $W$ includes rank $m>2$ cusps.
	Let $F : W \ar D''$ be the quasi isometry defined by $F(w) = w\cdot o$ for every $w \in W$.
	Then $F$ extends to $\widetilde{F}:\partial_G(W,S) \ar \Lambda(W)$ continuously.
	Moreover $\widetilde{F}$ is surjective and $W$-equivariant.
\end{prop}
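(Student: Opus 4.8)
The plan is to define $\widetilde F$ directly on Gromov sequences in $(W,S)$, to read off both well-definedness and continuity from a single comparison estimate, and to obtain surjectivity from a ray version of Lemma \ref{geogeo}. Everything rests on the fact — which I would record first — that $F\colon W\ar D''$ is a quasi-isometry: it is a quasi-isometric embedding by Lemma \ref{kuraberu}, and its image is cobounded because $\overline{K''}$ is compact and $K''$ is a fundamental region for the normalized action on $D''$. Given $\zeta=[\{w_k\}_k]\in\partial_G(W,S)$, I would first check that $\lim_k w_k\cdot o$ exists in $D\cup\partial D$ and lies in $\Lambda(W)$. Unboundedness is easy: a Gromov sequence satisfies $(w_i\,\vert\,w_j)_e\le\min\{\ell(w_i),\ell(w_j)\}$, so $\ell(w_k)\ar\infty$, and then Lemma \ref{kuraberu} together with the estimate $2\log\ell(w)-C\le d(o,w\cdot o)$ of \S5 gives $d(o,w_k\cdot o)\ar\infty$; since $D\cup\partial D$ is compact, every subsequential limit of $\{w_k\cdot o\}_k$ lies on $\partial D$, and as an accumulation point of $W\cdot o$ it lies in $\Lambda(W)$. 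The uniqueness of the limit, and most of the rest, I would reduce to the following claim.

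\medskip
\textbf{Claim.} If $\{u_j\}_j,\{v_j\}_j\subset W$ satisfy $(u_j\,\vert\,v_j)_e\ar\infty$ while $u_j\cdot o\ar\overline x$ and $v_j\cdot o\ar\overline z$ in $D\cup\partial D$, then $\overline x=\overline z$.
\medskip

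To prove the Claim I would argue by contradiction. If $\overline x\ne\overline z$, then since $\partial D=\widehat{Q}$ is a strictly convex ellipsoid the bi-infinite $d$-geodesic $[\overline x,\overline z]$ meets $D$, the $d$-segments $[u_j\cdot o,v_j\cdot o]$ converge to it, and by the Karlsson--Noskov estimate (Theorem \ref{kn}, transported through the isometry of Lemma \ref{hyp}), together with $d(o,m_j)-\delta\le(u_j\cdot o\,\vert\,v_j\cdot o)_o\le d(o,m_j)$ for the point $m_j$ of $[u_j\cdot o,v_j\cdot o]$ nearest to $o$, these segments all meet a fixed bounded neighbourhood of $o$. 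Now $o\in K''$ and the horoballs of $W\cdot O$ stay a definite distance away from $o$ (Lemma \ref{bunri} and cocompactness), so near $o$ the metrics $d$ and $d'$ coincide; using the comparison of $d'$-geodesics with $d$-geodesics-plus-horoball-detours recalled just before Lemma \ref{geogeo}, the $d'$-geodesic from $u_j\cdot o$ to $v_j\cdot o$ also meets a fixed bounded neighbourhood of $o$. By Lemma \ref{geogeo} there is a $W$-geodesic $\gamma_j$ from $u_j$ to $v_j$ whose image $F(\gamma_j)$ tracks this $d'$-geodesic within $P$, hence passes within a bounded distance of $o=F(e)$; since $F$ is a quasi-isometry, $\gamma_j$ passes within a bounded word-distance of $e$, and a geodesic of $W$ meeting a bounded neighbourhood of $e$ forces $(u_j\,\vert\,v_j)_e$ to be bounded — a contradiction. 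The hard part will be precisely this transfer of ``stays near $o$'' from the $d$-geodesic to the $d'$-geodesic and then to $\gamma_j$, i.e.\ ruling out that the $W$-geodesic escapes to infinity along a higher-rank cusp; all ingredients are available (Lemma \ref{geogeo}, the McMullen-type comparison before it, and Theorem \ref{kn}), but the bookkeeping near the basepoint, where horoballs may crowd in, is where the genuine work sits.

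Granting the Claim, $\widetilde F(\zeta):=\lim_k w_k\cdot o$ is well defined: distinct subsequential limits would violate the Claim (take $u_j,v_j$ from the two subsequences, using $(w_i\,\vert\,w_j)_e\ar\infty$), and if $\{w_k\}_k\sim_G\{w_k'\}_k$ then $\liminf_{i,j}(w_i\,\vert\,w_j')_e=\infty$ forces $\lim_k w_k\cdot o=\lim_k w_k'\cdot o$ by the Claim, the general equivalence $\sim$ being handled by chaining. For continuity I would run the usual diagonal argument: if $\zeta_j\ar\zeta$ in $\partial_G(W,S)$ but $\widetilde F(\zeta_j)\ar\overline z\ne\overline x=\widetilde F(\zeta)$ after passing to a subsequence of the compact set $\Lambda(W)$, then choosing representatives $\{w^{(j)}_k\}_k$ of $\zeta_j$ and $\{w_k\}_k$ of $\zeta$ and using $(\zeta_j\,\vert\,\zeta)\le\liminf_{i,k}(w^{(j)}_i\,\vert\,w_k)_e$ one selects indices producing $u_j,v_j\in W$ with $(u_j\,\vert\,v_j)_e\ar\infty$, $u_j\cdot o\ar\overline z$, $v_j\cdot o\ar\overline x$, contradicting the Claim. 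Equivariance is formal: the normalized action of each $w\in W$ extends to a homeomorphism of $D\cup\partial D$ (it is induced by the projective linear map $v\mapsto w(v)$, which by Remark \ref{zerodake} does not vanish on $\overline D$), whence $\widetilde F(w\cdot\zeta)=\widetilde F([\{ww_k\}_k])=\lim_k w\cdot(w_k\cdot o)=w\cdot\widetilde F(\zeta)$.

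Finally, for surjectivity I would take $\overline x\in\Lambda(W)$ and a $d$-geodesic ray $c$ from $o$ to $\overline x$ (available since $D$ is proper CAT(0) and $\partial D\simeq\partial_I D$). The construction in the proof of Lemma \ref{geogeo} applies verbatim to rays: replacing each horoball excursion of $c$ by the corresponding horospherical arc yields a path in $D''$ crossing a sequence $w_1\cdot K'',w_2\cdot K'',\dots$ of $K''$-orbits, which by the deletion condition of $(W,S)$ determines a geodesic ray $\gamma=\cdots s_2s_1$ in $W$ whose image $F(\gamma)$ is the path through $w_1\cdot o,w_2\cdot o,\dots$ and stays at bounded Hausdorff distance from the path above; since the detours occur in horoballs whose centres converge to $\overline x$, both paths converge to $\overline x$ in $D\cup\partial D$, and as $\gamma$ is a geodesic ray it is a Gromov sequence, so $\widetilde F([\gamma])=\overline x$. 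This completes the plan: $\widetilde F\colon\partial_G(W,S)\ar\Lambda(W)$ is the desired $W$-equivariant continuous surjection.
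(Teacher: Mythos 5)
Your treatment of well-definedness, continuity and equivariance is essentially the paper's own argument read contrapositively: the paper proves the quantitative estimate $(x\vert y)_o \ge \log\bigl(C\,(w_x\vert w_y)_{\id}^2\bigr)$ by comparing the nearest points of the $d$-geodesic, the modified path in $D''$ and $F(\gamma)$ to the basepoint (Remark \ref{chikai}, Lemma \ref{geogeo}, the inequality \eqref{hikaku}, Lemma \ref{kuraberu}), and then invokes Theorem \ref{kn}; your Claim is exactly this comparison run backwards, with the same ingredients, so that part is sound and not genuinely different in route.

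The genuine gap is in surjectivity, at the cusp points. Your construction ``take the $d$-geodesic ray from $o$ to $\overline x$ and apply the proof of Lemma \ref{geogeo} verbatim'' fails precisely when $\overline x \in W\cdot PF$: such a ray eventually enters the horoball $w\cdot O_p$ centred at $\overline x$ and never leaves it, so its terminal ``horoball excursion'' has no second endpoint and there is no horospherical arc to substitute. Worse, for $\overline x = p \in PF$ the ray $[o,p)$ lies entirely in $\overline K$ (the region $K=\PP\cap D$ is an intersection of half-spaces, hence convex for the Hilbert geodesics, and $p$ is an ideal vertex of $\overline{K'}$), so it crosses no wall at all and the crossing sequence is finite --- no Gromov sequence is produced, and your assertion that ``the detours occur in horoballs whose centres converge to $\overline x$'' has no content here. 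Since the parabolic fixed points do belong to $\Lambda(W)$ (the orbit of $o$ under the affine stabilizer accumulates at $p$), surjectivity is not established by your argument. The paper closes exactly this case separately: for $\xi\in W\cdot PF$ it takes the affine special subgroup $W'$ fixing $w\cdot\xi$ (Proposition \ref{bunrui}, Remark \ref{koteigun}), notes that $W'$ admits Gromov sequences, and uses that the limit set of an affine subgroup is the single point $\{w\cdot\xi\}$, so any such sequence pushes forward under $F$ to a sequence converging to $\xi$. You need to add this (or an equivalent) argument; the rest of your plan then goes through.
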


\begin{proof}
	In this proof we denote by $C$ a generic constant whose value may change line to line.
	We show that the Gromov product of any two orbits $w\cdot o, w'\cdot o$ of $o \in D''$
	for the metric $d$ is bounded below by the Gromov product of $w$ and $w'$ with respect to 
	the unit $\id \in W$ for the word metric.
	If this is true, then we have the well definedness of $\widetilde{F}$ by Theorem \ref{kn} and
	the continuity 
	by the fact that $\partial_G D$ and $\partial D$ are homeomorphic.
	More of this for any limit point $\xi$ which is not in $W \cdot PF$
	by taking the geodesic on $(D,d)$ from $o$ to $\xi$
	we can construct a sequence path.
	The corresponding sequence for that sequence path is actually a geodesic in $W$
	by Lemma \ref{geogeo}.
	If $\xi \in W \cdot PF$ then $w \cdot \xi \in PF$ for some $w \in W$.
	In this case there exists a Coxeter subsystem $(W',S')$ of $(W,B)$ such that $S' \subset S$ and
	$W'$ fixes $w \cdot \xi$ by Proposition \ref{bunrui}.
	Since $W'$ is affine, there exists at least one Gromov sequence.
	Then for any Gromov sequence $\{w'_i\}_i$ consists of elements in $W'$,
	the sequence $\{ww'_i\cdot o\}_i$ converges to $\xi$.
	Thus we see that 
	$\widetilde{F}^{-1}(\xi)$ is not empty and hence $\widetilde{F}$ is surjective.
	The $W$-equivariantness of $\widetilde{F}$ is trivial by the construction.
	
	Take $x,y \in W \cdot o$ arbitrarily and let $\tau$ be the geodesic on $(D,d)$ connecting 
	$x = w_x \cdot o$ and $y = w_y \cdot o$.
	Let $z$ be the nearest point from $o$ to $\tau$.
	Let $\gamma$ be a geodesic in $W$ which is constructed in the same way as the 
	proof of Lemma \ref{geogeo}.
	We construct a path $\tau'$ in $D''$ by replacing segments of $\tau$ which cross
	horoballs with the geodesics in $D''$ connecting each end points.
	We denote by $z'$ the nearest point from $o$ to $\tau'$.
	Now we have $d(o,z) \ge Cd(o,z')$ by Remark \ref{chikai}.
	Adding to this we put $z'' = w_z\cdot o \in W\cdot o$ as the nearest point from $z'$
	to $F(\gamma)$.
	Then $d(o,z') \ge d(o,z'')-C$ since the diameter of $K''$ is bounded.
	
	Furthermore by the inequality \eqref{hikaku} we have 
	\[
		\log \left(Cd'(p,q)^2\right) \le d(p,q)
	\]
	for any $p,q \in D''$.

	Then we have 
	\begin{align*}
		(x\vert y)_o &\ge d(o,z) - C \ge Cd(o,z') - C \ge Cd(o,z'') - C\\
					 &\ge \log \left(Cd'(o,z'')^2\right) \ge \log \left(C|w_z|^2\right)\\
					 &\ge \log \left(C(w_x\vert w_y)_{\id}^2\right).
	\end{align*}
	Thus we have the claim.
\end{proof}

This proves that the existence of the Cannon-Thurston maps for the case (ii) which is described 
at the beginning of Section 4.
For the other cases we have already done in \cite{m} as mentioned in Section 1.
Thus we obtain the conclusion. \hfill $\Box$

\begin{rem}\label{hitotsu}
	For a cusp $p \in PF$ there exists corresponding
	affine special Coxeter subsystem $(W',S')$ of $(W,B)$ and
	$W'$ fixes $p$.
	If the rank of $p$ is 2 then there exist $\alpha,\beta \in \Delta$ such that 
	$B(\alpha,\beta) = -1$ and the Coxeter subgroup $W'$ generated by $\{s_\alpha,s_\beta\}$
	is affine.
	Since an affine Coxeter group has only one limit point, 
	$\{(s_\alpha s_\beta)^k\cdot o\}_k$ and $\{(s_\beta s_\alpha)^k \cdot o\}_k$ 
	converges to the same limit point.
	However in the Gromov boundary of $(W,S)$,
	$\{(s_\alpha s_\beta)^k\}_k$ and $\{(s_\beta s_\alpha)^k\}_k$ lie in distinct equivalence class.
	In fact, considering another action of $(W,S)$ defined by 
	another bi-linear form $B'$ such that $B'(\alpha,\beta) < -1$,
	then the limit set $\Lambda_{B'}(W') \subset \Lambda_{B'}(W)$ consists of two points.
	In this case the limit points of
	$\{(s_\alpha s_\beta)^k\cdot o\}_k$ and $\{(s_\beta s_\alpha)^k \cdot o\}_k$
	are distinct.
	On the other hand the map $\partial_G W \ar \Lambda_{B'}(W)$ is well defined 
	hence $F'$ can never be injective.
	For the case where $p$ is not a rank 2 cusp,
	the author does not know whether the preimage of $p$ is a point or not.
	
	For other limit points, the map $\widetilde{F}$ is injective.
	Let $\xi \in \Lambda(W) \setminus W\cdot PF$ and 
	let $\{x_i\}_i$ be a sequence in $D$ converging to $\xi$ with $|x_i|\ge i$.
	Then by the properness and CAT(0)-ness of $D$, we can construct a ``good'' subsequence 
	$\{y_i\}_i$ of $\{x_i\}_i$ as follows.
	This method is due to \cite{BK}.
	Let $[o, x_i]$ be the geodesic segment connecting $o$ and $x_i$ for each $i \in \N$. 
	For $k<i$, let $P_k(x_i)$ be the intersection point of $[o,x_i]$ with $S(o,k)$ where
	$S(o,k) = \{y \in D\ \vert\ d(o,y)=k\}$.
	Writing $x^0_i= x_i$, we inductively define a sequence of nested subsequences. 
	Given a subsequence $\{x^{k-1}_i\}_{i=1}^\infty$ of $\{x_i\}_i$, where
	$k \in \N$ is the inductive index, 
	by the compactness of $S(o,k)$ we can take a subsequence $\{x^k_i\}_{i=1}^\infty$ of 
	$\{x^{k-1}_i\}_{i=1}^\infty$ such that $x^k_i = x_n$ for some $n\ge k$, 
	and such that all the points $P_k(x^k_i)$, $i \in \N$,
	lie within a distance $1$ of each other and converge to some point $z_k \in S(o,k)$ as 
	$i \ar \infty$.
	We write $z_0 = o$. 
	Then a ray given by joining geodesic segments $[z_{k-1}, z_k]$, $k \in \N$
	is the geodesic $[o,\xi]$ from $o$ to $\xi$.
	Now define $\{y_i\}_i$ as the diagonal sequence, i.e. $y_i = x^i_i$, $i \in \N$. 

	For any Gromov sequence $\{w_i\}_i$ in $W$ such that $\{w_i \cdot o\}_i$ converges to $\xi$,
	letting $x_i = w_i\cdot o$ for each $i \in \N$ in the argument above, 
	we have a subsequence $\{y_j\}_j= \{w'_j \cdot o\}_j$.
	Then since $\xi \in \Lambda(W) \setminus W \cdot PF$, 
	we have a geodesic $\{v_0,v_1,\ldots,v_n,\ldots\}$ in $W$ such that each $v_i\cdot o$ lie within
	a bounded distance from $[o,\xi]$.
	We can take a sequence $\{p_i\}_i$ in $[o,\xi]$ satisfying
	$B(p_i,1) \subset D''$ for each $i \in \N$.
	Let $r_i$ be the distance from $o$ to $p_i$.
	We consider the subsequence $\{v'_i\}$ of $\{v_i\}$ so that $d(v'_i\cdot o,p_i) < C$ where
	$C$ is the diameter of $K''$.
	Then for each $i \in \N$, $P_{r_i}(w'_j \cdot o)$ lie within a distance $1$ from $p_i$.
	Let $\{u^j_i\}_i$ be a sequence in $W$ so that each $u^j_i\cdot o$ is the nearest orbit of $o$
	from $P_{r_i}(w'_j \cdot o)$.
	Now, the geodesic word of $u^j_i$ is a subword of the geodesic word of $w'_j$.
	Moreover there exists a constant $C' >0$ such that
	\[
		|u^j_i|-C' \le |v'_i| \le |u^j_i| + C'.
	\]
	We need only finitely many orbits of $K'\cap D''$ to cover both $p_i$ and 
	$P_{r_i}(w'_j \cdot o)$ for each $i,j$ so that the union is connected
	and such numbers are bounded uniformly.
	This means that there exists another constant $C''$ such that
	\[
		|{v'_i}^{-1}u^j_i| \le C'',
	\]
	for any $i,j$.
	Thus we have
	\begin{align*}
	2(v'_i\vert w'_j) &= |v'_i| + |w'_j| - |{v'_i}^{-1}w'_j|\\
					  &\ge |v'_i| + |w'_j| - (|{v'_i}^{-1}u^j_i| + |{u^j_i}^{-1}w'_j|)\\
					  &\ge |v'_i| + |w'_j| - |{u^j_i}^{-1}w'_j| -C''\\
					  &= |v'_i| + |u^j_i| -C''\\
					  &\ge 2|v'_i| - C' -C''.
	\end{align*}
	Since $|w| \le Cd(w\cdot o,o)$, $|v'_i| \ar \infty$ as $i \ar \infty$.
	This shows that $\{v_i\} \sim \{v'_i\} \sim \{w'_i\} \sim \{w_i\}$ and hence 
	$\widetilde{F}^{-1}(\xi)$ is a singleton.
\end{rem}


\begin{proof}[Proof of Corollary \ref{amari}]		
	Moreover since $(W,S)$ is a Coxeter system, 
	by the deletion condition (\cite[pp.35, (D)]{davis}) any expression of an element $w$ of $W$
	includes a reduced expression of $w$.
	Therefore for a Coxeter subsystem $(W',S')$ of $(W,S)$, 
	any reduced expression of an element in $(W',S')$ is also a reduced expression as 
	an element in $(W,S)$.
	This shows that the Cayley graph of $(W',S')$ is embedded into the Cayley graph of $(W,S)$
	isometrically.
	If the normalized action of $(W',S')$ on its phase space is cocompact, 
	this embedding extends continuously to an embedding of the Gromov boundary of $(W',S')$
	into the Gromov boundary of $(W,S)$.
	Together with Remark \ref{hitotsu}, 
	the limit set $\Lambda(W')$ is naturally identified with 
	$\Lambda(W) \cap \conv(\widehat{\Delta'})$ via the Cannon-Thurston map of $(W,S)$.
\end{proof}


\begin{thebibliography}{99}

\bibitem{BR}
\textsc{O. Baker and T. R. Riley}, 
Cannon-Thurston maps do not always exist,
Forum of Mathematics, Sigma, {\bf 1} (2013), e3.

\bibitem{bb}
\textsc{A. Bj\"orner, and F. Brenti}, 
Combinatorics of Coxeter groups, 
Graduate Texts in Mathematics, volume 231, Springer, New York, 2005.

\bibitem{BH}
\textsc{M. R. Bridson, and A. Haefliger}, 
Metric Spaces of Non-Positive Curvature, 
Grundlehren der Mathematischen Wissenschaften
(Fundamental Principles of Mathematical Sciences), volume 319, Springer-Verlag, Berlin, 1999.

\bibitem{BK}
\textsc{S. M. Buckley, and S. L. Kokkendorff}, 
Comparing the Floyd and ideal boundaries of a metric space, 
Trans.\ Amer.\ Math.\ Soc.\ {\bf 361} (2009), no.\ 2, 715--734.

\bibitem{ct}
\textsc{J. W. Cannon, and W. P. Thurston},
Group invariant Peano curves,
Geom.\ Topol.\ {\bf 11}, 1315--1355, 2007.

\bibitem{dhr}
\textsc{M. Dyer, C. Hohlweg, and V. Ripoll}, 
Imaginary cones and limit roots of infinite Coxeter groups,
Preprint arXiv:1303.6710. 

\bibitem{davis}
\textsc{M. Davis}, 
The Geometry and Topology of Coxeter Groups, 
London Mathematical Society
Monographs Series, Princeton University Press, Princeton, 2008.

\bibitem{EG}
\textsc{D. Egloff}, 
Uniform Finsler Hadamard manifolds,
Ann.\ Inst.\ H. Poincar\'e Phys.\ Th\'eor.\ {\bf 66} (1997), no.\ 3, 323--357.

\bibitem{f}
\textsc{W. J. Floyd},
Group Completions and limit sets of Kleinian groups, 
Invent.\ Math.\ {\bf 57} (1980), 205--218.

\bibitem{hlr}
\textsc{C. Hohlweg, J. Labb\'e, and V. Ripoll},
Asymptotical behaviour of roots of infinite Coxeter groups I,
preprint arXiv:1112.5415v2.

\bibitem{hpr}
\textsc{C. Hohlweg, J-P. Pr\'eaux, V. Ripoll}
On the Limit Set of Root Systems of Coxeter Groups and Kleinian Groups,
preprint arXiv:1305.0052.

\bibitem{hmn}
\textsc{A. Higashitani, R. Mineyama, and N. Nakashima},
A metric analysis of infinite Coxeter groups 
: the case of type $(n-1,1)$ Coxeter matrices,
preprint arXiv:1212.6617.

\bibitem{h} 
\textsc{J. E. Humphreys}, 
Reflection groups and Coxeter groups, 
volume 29 of Cambridge Studies in Advanced Mathematics. 
Cambridge University Press, Cambridge, 1990.

\bibitem{KN}
\textsc{A. Karlsson and G. A. Noskov},
The Hilbert metric and Gromov hyperbolicity,
Enseign.\ Math.\ {\bf 48} (2002), 73--89.

\bibitem{MO}
\textsc{Y. Matsuda and S. Oguni}, 
On Cannon-Thurston maps for relatively hyperbolic groups, 
arXiv:1206.5868,2012

\bibitem{McM}
\textsc{C. T. McMullen,} 
Local connectivity, Kleinian groups and geodesics on the blowup of the torus,
Invent.\ Math.\ {\bf 146} (2001), 35--91.

\bibitem{m}
\textsc{R. Mineyama},
Cannon-Thurston maps for Coxeter groups with signature $(n-1,1)$,
preprint arXiv:1312.3174.

\bibitem{Mit1}
\textsc{M. Mitra}, 
Cannon-Thurston Maps for Hyperbolic Group Extensions,
Topology {\bf 37} (1998), 527--538.

\bibitem{Mit2}
\textsc{M. Mitra}, 
Cannon-Thurston Maps for Trees of Hyperbolic Metric Spaces, 
J.Differential Geom.\ {\bf 48} (1998), 135--164.

\bibitem{Mj}
\textsc{M. Mj}
 Cannon-Thurston maps for surface groups,
Annals of Math.\ {\bf 179} (2014), no.\ 1, 1--80.

\bibitem{y}
\textsc{T. Yamaguchi},
Word length and limit sets of Kleinian groups,
Kodai Math.\ J. {\bf 28} (2005), no.\ 2, 439--451.

\bibitem{vaisala}
\textsc{J. V\"ais\"al\"a}, 
Gromov hyperbolic spaces, 
Expo.\ Math.\ {\bf 23} (2005), 187--231.

	
\end{thebibliography}
\end{document}